\title{Some remarks on stronger versions of the \\ Boundary Problem for Banach spaces}
\author{Jan-David Hardtke}
\date{}
\DeclareMathOperator{\co}{co}
\DeclareMathOperator{\ex}{ex}
\DeclareMathOperator{\lin}{span}
\providecommand{\ncl}[1]{\overline{#1}}
\providecommand{\cl}[2]{\overline{#1}^{#2}}
\providecommand{\ncco}[1]{\overline{\co} #1}
\providecommand{\cco}[2]{\overline{\co}^{#2} #1}
\providecommand{\dist}[2]{\operatorname{dist}(#1,#2)}
\providecommand{\keywords}[1]{\par \noindent {\scriptsize {\bf Keywords:} #1}}
\providecommand{\AMS}[1]{\par \noindent {\scriptsize {\bf AMS Subject Classification (2000):} #1}}
\DeclarePairedDelimiter{\abs}{\lvert}{\rvert}
\DeclarePairedDelimiter{\set}{\lbrace}{\rbrace}
\DeclarePairedDelimiter{\norm}{\lVert}{\rVert}
\DeclarePairedDelimiter{\paren}{\lparen}{\rparen}
\newtheorem{theorem}{Theorem}[section]
\newtheorem{corollary}[theorem]{Corollary}
\newtheorem{lemma}[theorem]{Lemma}
\newtheorem{proposition}[theorem]{Proposition}
\definecolor{darkgreen}{rgb}{0,0.5,0}
\begin{document}

\maketitle

\begin{abstract}
\noindent Let $X$ be a real Banach space. A subset $B$ of the dual unit sphere of $X$ is said to be a boundary for $X$, if every element of $X$ attains its norm on some functional in $B$. The well-known Boundary Problem originally posed by Godefroy asks whether a bounded subset of $X$ which is compact in the topology of pointwise convergence on $B$ is already weakly compact. This problem was recently solved by  Pfitzner in the positive. In this note we collect some stronger versions of the solution to the Boundary Problem, most of which are restricted to special types of Banach spaces. We shall use the results and techniques of Pfitzner, Cascales et al., Moors and others.
\end{abstract}

\keywords{Boundary; weak compactness; convex hull; extreme points; $\varepsilon$-weakly relatively compact sets; $\varepsilon$-interchangeable double limits}
\AMS{46A50; 46B50}

\section{Introduction}

First we fix some notation: Throughout this paper $X$ denotes a real Banach space, $X^{*}$ its dual, $B_X$ its closed unit ball and $S_X$ its unit sphere. For a subset $B$ of $X^{*}$ we denote by $\sigma_B$ the topology on $X$ of pointwise convergence on $B$. If $A\subseteq X$, then $\co A$ stands for the convex hull of $A$ and $\cl{A}{\tau}$ for the closure of $A$ in any topology $\tau$ on $X$, except for the norm closure, which we simply denote by $\ncl{A}$. Also, we denote by $\ex C$ the set of extreme points of a convex subset $C$ of $X$. \par

Now recall that a subset $B$ of $S_{X^{*}}$ is called a boundary for $X$, if for every $x\in X$ there is some $b\in B$ such that $b(x)=\norm*{x}$. It easily follows from the Krein-Milman theorem that $\ex B_{X^{*}}$ is always a boundary for $X$. In 1980 Bourgain and Talagrand proved in \cite{bourgain} that a bounded subset $A$ of $X$ is weakly compact if it is merely compact in the topology $\sigma_E$, where $E=\ex B_{X^{*}}$. In \cite{godefroy1} Godefroy asked whether the same statement holds for an arbitrary boundary $B$, a question which has become known as the Boundary Problem. \newpage

Long since only partial positive answers were known, for example if $X=C(K)$ for some compact Hausdorff space $K$ (cf. \cite[Proposition 3]{cascales1}) or $X=\ell^1(I)$ for some set $I$ (cf. \cite[Theorem 4.9]{cascales2}). In \cite[Theorem 1.1]{spurny} the positive answer for $L_1$-preduals is contained. Moreover, the answer is positive if the set $A$ is additionally assumed to be convex (cf. \cite[p.44]{godefroy2}). It was only in 2008 that the positive answer to the Boundary Problem was found in full generality by Pfitzner in \cite{pfitzner}.\par 

An important tool in the study of the Boundary Problem is the so called Simons' equality:

\begin{theorem}[Simons, cf. \cite{simons2}] \label{th:SLS}
If $B$ is a boundary for $X$, then 
\begin{equation}
\sup_{x^{*}\in B}\limsup_{n\to \infty}x^{*}(x_n)=\sup_{x^{*}\in B_{X^{*}}}\limsup_{n\to \infty}x^{*}(x_n) \label{eq:SLS}
\end{equation}
holds for every bounded sequence $(x_n)_{n\in \mathbb{N}}$ in $X$.
\end{theorem}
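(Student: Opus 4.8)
The inequality $\sup_{x^{*}\in B}\limsup_n x^{*}(x_n)\le\sup_{x^{*}\in B_{X^{*}}}\limsup_n x^{*}(x_n)$ is immediate from $B\subseteq B_{X^{*}}$, so the plan is to prove the reverse inequality. Write $\alpha=\sup_{x^{*}\in B_{X^{*}}}\limsup_n x^{*}(x_n)$. It is convenient to first record the identity
\[
\alpha=\sup_{z\in K}\norm{z},\qquad \sup_{x^{*}\in B}\limsup_n x^{*}(x_n)=\sup_{z\in K}\,\sup_{x^{*}\in B}x^{*}(z),
\]
where $K\subseteq X^{**}$ is the set of $w^{*}$-cluster points of $(x_n)$ in the bidual. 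Indeed, for fixed $x^{*}$ the number $\limsup_n x^{*}(x_n)$ is the largest cluster value of the scalar sequence, which by $w^{*}$-compactness of bounded subsets of $X^{**}$ equals $\max_{z\in K}x^{*}(z)$; interchanging the two suprema gives both formulas. This reformulation pinpoints the difficulty: although $B$ norms every element of $X$, so that $\sup_{x^{*}\in B}x^{*}(y)=\norm{y}$ for all $y\in X$, it need \emph{not} norm an individual cluster point $z\in K\setminus X$, and hence one cannot compare the two suprema over $K$ pointwise in $z$.

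The engine I would use is Simons' averaging argument. Fix $\varepsilon>0$ and a real $c<\alpha$; the goal is to exhibit a single $x^{*}\in B$ with $\limsup_n x^{*}(x_n)\ge c-\varepsilon$, for then letting $c\uparrow\alpha$ and $\varepsilon\downarrow 0$ yields $\sup_{x^{*}\in B}\limsup_n x^{*}(x_n)\ge\alpha$. The idea is to build, by induction, a countable convex combination $y=\sum_{k}\mu_k u_k$ in $X$, where each $u_k$ is itself a finite convex combination of $\set{x_n:n\ge N_k}$ with $N_k\uparrow\infty$, so that $y$ mixes contributions from arbitrarily late tails while keeping $\norm{y}\ge c$ (this is possible precisely because $\alpha=\sup_{z\in K}\norm{z}$). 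The boundary property then supplies $x^{*}\in B$ with $x^{*}(y)=\norm{y}\ge c$. Because $y$ is assembled from ever later tails, the value $x^{*}(y)=\sum_k\mu_k\,x^{*}(u_k)$ can only be large if $x^{*}$ takes large values on the far-out building blocks $u_k$, and hence on $x_n$ for infinitely many large $n$; carried out quantitatively this forces $\limsup_n x^{*}(x_n)\ge c-\varepsilon$.

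The main obstacle is exactly that the naive version of this scheme collapses: the early blocks carry positive convex weight and could by themselves account for the size of $x^{*}(y)$, so large $\norm{y}$ alone does not force $x^{*}$ to be large on the tail. Equivalently, one is tempted to take a $w^{*}$-cluster point of the norming functionals, but $B$ is not $w^{*}$-closed, so such a limit escapes $B$. Overcoming this is the technical heart of the proof and is what the induction must handle: at each stage the block $u_k$ has to be chosen \emph{adaptively}, so as to suppress the contribution of the functionals relevant at earlier stages, thereby defeating the non-uniform dependence of $\limsup_n x^{*}(x_n)$ on $x^{*}\in B$. Thus the witness is produced as the norming functional of one carefully built convex combination, staying inside $B$ throughout, rather than by any limiting process. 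Once the inductive construction is in place, the conclusion $\sup_{x^{*}\in B}\limsup_n x^{*}(x_n)\ge\alpha$ follows, completing the proof of \eqref{eq:SLS}.
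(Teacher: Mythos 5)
The paper itself offers no proof of this statement: Theorem~\ref{th:SLS} is quoted as a known result of Simons from \cite{simons2}, so there is nothing in-paper to compare against and your argument has to stand on its own. Your preliminary reductions are fine --- the inequality $\leq$ is trivial, and the identities $\alpha=\sup_{z\in K}\norm*{z}$ and $\sup_{x^*\in B}\limsup_n x^*(x_n)=\sup_{z\in K}\sup_{x^*\in B}z(x^*)$ via weak*-cluster points are correct --- and you have correctly identified the standard route, namely Simons' sup--limsup lemma proved by an inductive construction of a convex series whose norming functional in $B$ serves as the witness. The problem is that the proposal stops exactly where the theorem begins. The entire content of Simons' result is the construction you describe only qualitatively: one must choose $u_{k+1}$ in the convex hull of a suitable tail so as to \emph{nearly minimize} a quantity of the form $\sup_{b\in B}\bigl(\sum_{j\leq k}2^{-j}u_j(b)+2^{-k}u(b)\bigr)$, and it is a telescoping comparison of these near-minimal suprema that converts $x^*(y)\geq c$ into $\limsup_n x^*(x_n)\geq c-\varepsilon$ for the norming functional $x^*\in B$. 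You acknowledge that the naive scheme collapses and that the blocks must be chosen ``adaptively,'' but you never say how; ``once the inductive construction is in place'' is precisely the assertion to be proved, and as written there is no mechanism forcing the norming functional of $y$ to be large on a tail rather than on the early blocks.

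There is also a concrete quantitative slip: you claim that $\norm*{y}\geq c$ can be maintained ``precisely because $\alpha=\sup_{z\in K}\norm*{z}$.'' A convex combination of a tail of the original sequence can have arbitrarily small norm even when $\alpha$ is large --- take $x_n=(-1)^n e$ for a unit vector $e$, where $\alpha=1$ but $\tfrac12(x_{2k}+x_{2k+1})=0$. To guarantee that every convex combination of every tail has norm at least $c$, you must first fix $x^*\in B_{X^*}$ with $\limsup_n x^*(x_n)>c$ and pass to a subsequence along which $x^*(x_n)>c$; only then does one get $\inf\set*{\,\norm*{w}: w\in\co\set*{x_n:n\geq k}\,}\geq c$, which is the actual input to Simons' lemma. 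This subsequence extraction is missing from the sketch, and without it the very first step of the proposed induction cannot be carried out.
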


\noindent In particular, it follows from Theorem 1.1 that the well-known Rainwater's theorem for the extreme points of the dual unit ball (cf. \cite{rainwater}) holds true for an arbitrary boundary:

\begin{corollary}[Simons, cf. \cite{simons1} or \cite{simons2}] \label{cor:RS conv thm}
If $B$ is a boundary for $X$, then a bounded sequence $(x_n)_{n\in \mathbb{N}}$ in $X$ is weakly convergent to $x\in X$ iff it is $\sigma_B$-convergent to $x$.
\end{corollary}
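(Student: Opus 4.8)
The plan is to deduce the corollary directly from Simons' equality (Theorem~\ref{th:SLS}). One implication is immediate and needs no machinery: since $B\subseteq X^{*}$, the topology $\sigma_B$ is coarser than the weak topology, so any weakly convergent sequence is automatically $\sigma_B$-convergent to the same limit. All the substance lies in the converse, and that is where I would invoke the equality.

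For the converse, suppose $(x_n)_{n\in\mathbb{N}}$ is bounded and $\sigma_B$-convergent to $x$. First I would reduce to the case $x=0$ by passing to $y_n:=x_n-x$, which is again bounded and satisfies $b(y_n)\to 0$ for every $b\in B$ precisely because $b(x_n)\to b(x)$; moreover proving that $y_n\to 0$ weakly is equivalent to proving $x_n\to x$ weakly. So it suffices to show: if $(y_n)_{n\in\mathbb{N}}$ is bounded and $b(y_n)\to 0$ for all $b\in B$, then $x^{*}(y_n)\to 0$ for all $x^{*}\in X^{*}$.

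The key step is to apply Simons' equality to the bounded sequence $(y_n)_{n\in\mathbb{N}}$. Since $\limsup_{n\to\infty} b(y_n)=0$ for each $b\in B$, the left-hand side of \eqref{eq:SLS} equals $0$, and hence
\begin{equation*}
\sup_{x^{*}\in B_{X^{*}}}\limsup_{n\to\infty}x^{*}(y_n)=0,
\end{equation*}
so that $\limsup_{n\to\infty} x^{*}(y_n)\le 0$ for every $x^{*}\in B_{X^{*}}$. Applying the identical argument to $(-y_n)_{n\in\mathbb{N}}$ (again bounded and still $\sigma_B$-null) and using $\limsup_{n\to\infty}(-x^{*}(y_n))=-\liminf_{n\to\infty}x^{*}(y_n)$, I obtain $\liminf_{n\to\infty} x^{*}(y_n)\ge 0$ for every $x^{*}\in B_{X^{*}}$. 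Combining the two inequalities forces $\lim_{n\to\infty} x^{*}(y_n)=0$ for each $x^{*}\in B_{X^{*}}$, and since every functional in $X^{*}$ is a scalar multiple of one in $B_{X^{*}}$, this gives $x^{*}(y_n)\to 0$ for all $x^{*}\in X^{*}$, i.e. $y_n\to 0$ weakly. I do not expect a genuine obstacle here, since Simons' equality does all the heavy lifting; the only point requiring care is that weak convergence needs control of both the $\limsup$ and the $\liminf$, which is exactly why the equality must be invoked twice, once for $(y_n)$ and once for $(-y_n)$.
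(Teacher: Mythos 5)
Your proof is correct and is exactly the derivation the paper intends: the corollary is stated as an immediate consequence of Simons' equality (Theorem~\ref{th:SLS}), and your two applications of \eqref{eq:SLS} to $(y_n)$ and $(-y_n)$ are the standard way to make that "in particular" precise.
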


Pfitzner's proof also uses Simons' equality, as well as a quantitative version of Rosenthal's $\ell^1$-theorem due to Behrends (cf. \cite{behrends}) and an ingenious variant of Hagler-Johnson's construction. \par 

Next we recall the following known characterization of weak compactness (compare \cite[p.145-149]{holmes}, \cite[Theorem 5.5 and Exercise 5.19]{floret} as well as the proof of \cite[Theorem V.6.2]{dunford}). It is a strengthening of the usual Eberlein-\v{S}mulian theorem.

\begin{theorem} \label{th:convex hull char wcomp}
 Let $A$ be a bounded subset of $X$. Then the following assertions are equivalent:
\begin{enumerate}[\upshape(i)]
\item $A$ is weakly relatively compact.
\item For every sequence $(x_n)_{n\in \mathbb{N}}$ in A we have that 
\begin{equation*}
\bigcap _{k=1}^{\infty}\ncco{\set*{x_n:n\geq k}} \neq \emptyset.
\end{equation*}
\item For every sequence $(x_n)_{n\in \mathbb{N}}$ in $A$ there is some $x\in X$ such that 
\begin{equation*}
x^{*}(x)\leq \limsup_{n\to \infty}x^{*}(x_n) \ \ \forall x^{*}\in X^{*}.
\end{equation*}
\end{enumerate}
\end{theorem}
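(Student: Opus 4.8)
The plan is to prove the cyclic chain of implications (i) $\Rightarrow$ (ii) $\Rightarrow$ (iii) $\Rightarrow$ (i), which is the standard route for such equivalences. The implication (ii) $\Rightarrow$ (iii) should be essentially a reformulation: if $x$ lies in the intersection $\bigcap_{k=1}^{\infty}\ncco{\set*{x_n:n\geq k}}$, then for each fixed $k$ and each $x^{*}\in X^{*}$ one has $x^{*}(x)\leq\sup_{n\geq k}x^{*}(x_n)$, since $x^{*}$ is continuous and linear (and hence bounded on the convex hull by its supremum over the generating set). Letting $k\to\infty$ turns the supremum into $\limsup_{n\to\infty}x^{*}(x_n)$, which yields (iii). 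So I would dispatch this step quickly.

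For (i) $\Rightarrow$ (ii), I would use that weak relative compactness of $A$ passes to the sets $\ncco{\set*{x_n:n\geq k}}$: by the Krein--\v{S}mulian theorem the closed convex hull of a weakly relatively compact set is again weakly relatively compact, so each $C_k:=\ncco{\set*{x_n:n\geq k}}$ is weakly compact (being also convex and norm-closed, hence weakly closed). The $C_k$ form a decreasing sequence of nonempty weakly compact sets, so by the finite intersection property their intersection is nonempty, giving (ii).

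The real work is in (iii) $\Rightarrow$ (i), which is where the Eberlein--\v{S}mulian content lives. My plan is to verify (i) via the Eberlein--\v{S}mulian theorem by showing every sequence $(x_n)$ in $A$ has a weakly convergent subsequence; equivalently, I would show that the weak-$*$ closure of $A$ in $X^{**}$ is contained in $X$, using Grothendieck's double-limit / interchange-of-limits criterion. Concretely, given a sequence $(x_n)$ in $A$, I pass (by boundedness and Alaoglu) to a subnet, or better to a suitable subsequence, converging weak-$*$ to some $x^{**}\in X^{**}$; the hypothesis (iii), applied to this sequence and all its subsequences, furnishes for each subsequence a point $x\in X$ dominating the $\limsup$ of every functional, and one shows this forces $x^{**}=x\in X$. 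The hard part will be organizing the subsequence selection so that (iii) can be leveraged simultaneously for $x^{*}$ and $-x^{*}$ (turning the one-sided $\limsup$ inequality into a genuine limit and hence identifying $x^{**}$ as an element of $X$), and then invoking Eberlein--\v{S}mulian to upgrade ``every sequence has a weakly convergent subsequence'' to weak relative compactness.

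Since the statement is quoted from the literature, an alternative I would keep in mind is simply to cite the quantitative Eberlein--\v{S}mulian machinery directly; but the self-contained argument above, resting on Krein--\v{S}mulian for the easy direction and Grothendieck's interchange criterion together with Eberlein--\v{S}mulian for the hard direction, is the cleanest conceptual path.
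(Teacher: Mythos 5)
Your overall architecture is sound, and it is worth noting that the paper does not actually prove this theorem at all: it is quoted from the literature (Holmes, Floret, Dunford--Schwartz), and the only in-paper surrogate is the quantitative generalization in Theorem \ref{th:quan convex hull char wcomp} together with Lemma \ref{lemma:quan convex hull char}. Your two easy implications are fine: (ii) $\Rightarrow$ (iii) is the evaluation argument you describe (it is the $\varepsilon=0$, $B=B_{X^{*}}$ case of Lemma \ref{lemma:quan convex hull char}), and for (i) $\Rightarrow$ (ii) your Krein--\v{S}mulian route works, although it is more than you need: a weak cluster point $x$ of $(x_n)_{n\in\mathbb{N}}$ already lies in $\cl{\set*{x_n:n\geq k}}{w}\subseteq\ncco{\set*{x_n:n\geq k}}$ for every $k$, since norm-closed convex sets are weakly closed.

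The one place where you have not actually produced a proof is (iii) $\Rightarrow$ (i), which you correctly identify as the substantive direction but then leave at the level of ``one shows this forces $x^{**}=x$'' and ``the hard part will be organizing the subsequence selection.'' You are conflating two different proofs here: the Whitley-style argument (countable total set of functionals over a separable subspace, diagonal extraction, then Eberlein--\v{S}mulian), which does require the bookkeeping you are worried about, and the Grothendieck double-limit route, which requires none of it. If you commit to the latter, the difficulty you flag dissolves: given sequences $(x_n)_{n\in\mathbb{N}}$ in $A$ and $(x_m^{*})_{m\in\mathbb{N}}$ in $B_{X^{*}}$ for which both iterated limits exist, apply (iii) \emph{once} to $(x_n)_{n\in\mathbb{N}}$ and use that (iii) already quantifies over all of $X^{*}$ (so over $-x^{*}$ as well) to get $x\in X$ with $\liminf_n x^{*}(x_n)\leq x^{*}(x)\leq\limsup_n x^{*}(x_n)$ for every $x^{*}$. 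Then $x_m^{*}(x)=\lim_n x_m^{*}(x_n)$ for each $m$, so $\lim_m\lim_n x_m^{*}(x_n)=\lim_m x_m^{*}(x)$; and if $x^{*}$ is any weak*-cluster point of $(x_m^{*})_{m\in\mathbb{N}}$, then $\lim_m x_m^{*}(x_n)=x^{*}(x_n)$ for each $n$, whence $\lim_n\lim_m x_m^{*}(x_n)=\lim_n x^{*}(x_n)=x^{*}(x)=\lim_m x_m^{*}(x)$. The two iterated limits coincide, so $A$ has $0$-interchangeable double limits with $B_{X^{*}}$ and is weakly relatively compact by Proposition \ref{prop:eps-WRC}(ii) with $\varepsilon=0$. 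This is precisely the $\varepsilon=0$ case of the paper's own proof of Theorem \ref{th:quan convex hull char wcomp}, so your chosen tool is the right one; you just need to carry the computation out rather than defer it to an unspecified subsequence scheme.
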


In \cite{moors} Moors proved a statement stronger than the equivalence of (i) and (ii), which also sharpens the result from \cite{bourgain}:

\begin{theorem}[Moors, cf. \cite{moors}] \label{th:moors char wc}
A bounded subset $A$ of $X$ is weakly relatively compact iff for every sequence $(x_n)_{n\in \mathbb{N}}$ in $A$ we have that
\begin{equation*}
\bigcap _{k=1}^{\infty} \cco{\set*{x_n:n\geq k}}{\sigma_E} \neq \emptyset,
\end{equation*}
where $E=\ex B_{X^{*}}$. In particular, $A$ is weakly relatively compact if it is merely relatively countably compact in the topology $\sigma_E$.
\end{theorem}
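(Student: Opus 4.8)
The plan is to reduce everything to the characterization in Theorem~\ref{th:convex hull char wcomp}, with Simons' equality (Theorem~\ref{th:SLS}) and the Rainwater--Simons theorem (Corollary~\ref{cor:RS conv thm}) for the boundary $E=\ex B_{X^{*}}$ as the main tools. First I record the elementary inclusion behind the whole statement: since $\sigma_E$ is coarser than the weak topology, the weak closure of any set is contained in its $\sigma_E$-closure, and as weakly closed convex sets coincide with norm-closed ones, we get $\ncco{S}\subseteq \cco{S}{\sigma_E}$ for every $S\subseteq X$. Hence the forward implication is immediate: if $A$ is weakly relatively compact, then Theorem~\ref{th:convex hull char wcomp}(ii) gives $\bigcap_k\ncco{\set*{x_n:n\geq k}}\neq\emptyset$, and a fortiori $\bigcap_k\cco{\set*{x_n:n\geq k}}{\sigma_E}\neq\emptyset$. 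The ``in particular'' clause also reduces to the main criterion: if $A$ is relatively countably compact in $\sigma_E$, then any sequence $(x_n)$ in $A$ has a $\sigma_E$-cluster point $x$, and $x\in\cl{\set*{x_n:n\geq k}}{\sigma_E}\subseteq\cco{\set*{x_n:n\geq k}}{\sigma_E}$ for every $k$, so the intersection is nonempty.

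For the converse I fix a sequence $(x_n)$ in $A$ and aim to verify Theorem~\ref{th:convex hull char wcomp}(ii), i.e.\ $x\in\ncco{\set*{x_j:j\geq k}}$ for all $k$ for some $x$. I pick $x\in\bigcap_k\cco{\set*{x_j:j\geq k}}{\sigma_E}$ supplied by the hypothesis. Since evaluation at each $e\in E$ is $\sigma_E$-continuous and every element of $\co\set*{x_j:j\geq k}$ has $e$-value at most $\sup_{j\geq k}e(x_j)$, passing to the $\sigma_E$-limit gives $e(x)\leq\sup_{j\geq k}e(x_j)$ for every $k$, whence $e(x)\leq\limsup_j e(x_j)$; applying this to $-e\in E$ yields the two-sided sandwich $\liminf_j e(x_j)\leq e(x)\leq\limsup_j e(x_j)$ for every $e\in E$.

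The heart of the matter is to upgrade this extreme-point information to all of $X^{*}$. Here I would use two mechanisms. The first is Simons' equality: for each $y\in X$ I apply Theorem~\ref{th:SLS} to the bounded sequence $(x_j-y)$ with boundary $E$; using $\limsup_j e(x_j-y)=\limsup_j e(x_j)-e(y)\geq e(x-y)$ and the fact that $E$ is a boundary, the left-hand side dominates $\sup_{e\in E}e(x-y)=\norm{x-y}$, so Simons' equality gives $\sup_{x^{*}\in B_{X^{*}}}\limsup_j x^{*}(x_j-y)\geq\norm{x-y}$ for all $y\in X$, linking the candidate $x$ to the weak$^{*}$-cluster behaviour of $(x_j)$. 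The second mechanism, which actually lands a point in the norm-closed convex hulls, is to extract a convex block subsequence $y_k=\sum_{j\in I_k}\lambda_j x_j$ (with increasing blocks $I_k$) that converges in $\sigma_E$; by Corollary~\ref{cor:RS conv thm} such a block sequence then converges \emph{weakly} to some $y\in X$, and since $y_m\in\co\set*{x_j:j\geq k}$ for $m$ large, one obtains $y\in\ncco{\set*{x_j:j\geq k}}$ for every $k$, establishing Theorem~\ref{th:convex hull char wcomp}(ii).

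The hard part is this block extraction. Because $\sigma_E$ need not be metrizable on bounded sets, membership of $x$ in the $\sigma_E$-closed convex hulls of the tails only supplies \emph{nets}, and producing a single convex block \emph{sequence} converging simultaneously on the whole (uncountable) boundary $E$ requires a genuine uniformization; this is exactly where the Simons inequality underlying Theorem~\ref{th:SLS} must be deployed, converting the pointwise-on-$E$ cluster information into control of $\sup_{e\in E}e(\,\cdot\,)$ along well-chosen blocks. Equivalently, it is the step ruling out the $\ell^{1}$-type behaviour that, by Rosenthal's theorem, is the sole obstruction to a weakly convergent convex block subsequence. I expect this uniform extraction to be the crux; the surrounding bookkeeping---reducing to sequences via Eberlein--\v{S}mulian and passing to the block sequence---should be routine.
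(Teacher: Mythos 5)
Your forward implication and your reduction of the ``in particular'' clause are both correct and routine. The problem is the converse, which is the entire content of the theorem, and there your argument stops exactly where the work begins. From $x\in\bigcap_k\cco{\set*{x_j:j\geq k}}{\sigma_E}$ you correctly extract the sandwich $\liminf_j e(x_j)\leq e(x)\leq\limsup_j e(x_j)$ for all $e\in E$, but such an inequality known only on a boundary is precisely the kind of information that does \emph{not} automatically extend to all of $B_{X^{*}}$ --- if it did, Theorem~\ref{th:convex hull char wcomp}(iii) would finish instantly and the whole Boundary Problem circle of ideas would be trivial. Neither of your two proposed ``mechanisms'' closes this gap: the Simons-equality computation only yields the lower bound $\sup_{x^{*}\in B_{X^{*}}}\limsup_j x^{*}(x_j-y)\geq\norm*{x-y}$, which points in the wrong direction for verifying condition (iii), and the convex block subsequence converging pointwise on all of $E$ is simply asserted to exist. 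You acknowledge this yourself (``I expect this uniform extraction to be the crux''), but identifying the crux is not the same as resolving it; as written, the proof of the nontrivial direction is missing.

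For comparison: the paper does not reprove this theorem either --- it quotes it from Moors --- but it records the actual route, namely Theorem~\ref{th:moors}: every infinite bounded set contains a countably infinite subset $F$ with $\cco{F}{\sigma_E}=\ncco{F}$. Applied to the tails of a sequence via a diagonal argument (exactly as in the proof of Corollary~\ref{cor:quan version moors char wc}), this replaces the $\sigma_E$-closed convex hulls in your hypothesis by norm-closed convex hulls of tails of a subsequence, and Theorem~\ref{th:convex hull char wcomp}(ii) then applies directly. The proof of Theorem~\ref{th:moors} itself --- see Theorem~\ref{th:moors wssep} for the flavour of the argument in the weak*-separable case --- is where the Bishop--Phelps theorem, Hahn--Banach separation and the Rainwater--Simons convergence theorem are actually deployed to manufacture the convergence you need; that is the argument your proposal would have to supply to be complete.
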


\noindent In fact, Moors gets this theorem as a corollary to the following one:

\begin{theorem}[Moors, cf. \cite{moors}] \label{th:moors}
 Let $A$ be an infinite bounded subset of $X$. Then there exists a countably infinite set $F\subseteq A$ with $\cco{F}{\sigma_E}=\ncco{F}$, where $E=\ex B_{X^{*}}$. In particular, for each bounded sequence $(x_n)_{n\in \mathbb{N}}$ in $X$ there is a subsequence $(x_{n_k})_{k\in \mathbb{N}}$ with $\cco{\set*{x_{n_k}:k\in \mathbb{N}}}{\sigma_E}\subseteq \ncco{\set*{x_n:n\in \mathbb{N}}}$.
\end{theorem}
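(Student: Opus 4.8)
The plan is to reduce the statement to a sequential question and then to exploit Simons' equality through its consequence, the Rainwater--Simons theorem. First I would pass from the infinite bounded set $A$ to a sequence $(a_n)_{n\in\mathbb N}$ of distinct points of $A$; the countable set $F$ will be extracted as a subsequence, so the closing ``in particular'' clause follows at once by applying the main assertion to $A=\set*{a_n:n\in\mathbb N}$ and reindexing. Since $\sigma_E$ is coarser than the norm topology one always has $\ncco{F}\subseteq\cco{F}{\sigma_E}$, so the entire content lies in producing a subsequence for which the reverse inclusion holds.

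The pivotal observation I would record is a consequence of Corollary~\ref{cor:RS conv thm} valid for \emph{every} bounded set: the $\sigma_E$-\emph{sequential} closure of $\co F$ already equals $\ncco{F}$. Indeed, if $y_j\in\co F$ and $y_j\to x$ in $\sigma_E$, then $(y_j)$ is $\sigma_B$-convergent for the boundary $B=E$, hence weakly convergent to $x$ by Corollary~\ref{cor:RS conv thm}, so $x\in\cl{\co F}{w}=\ncco{F}$; the reverse containment is trivial. Since the dual of $(X,\sigma_E)$ is $\lin E$, one has $\cco{F}{\sigma_E}=\set*{x:f(x)\le\sup_{y\in F}f(y)\ \forall f\in\lin E}$, and $\lin E$ is weak$^*$-dense in $X^{*}$ by the Krein--Milman theorem. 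Thus proving $\cco{F}{\sigma_E}=\ncco{F}$ is \emph{exactly} the task of forcing the $\sigma_E$-closure of the convex hull to be attained sequentially --- an angelicity-type property of $\sigma_E$ on the relevant bounded convex set. This reformulation is what turns the problem into one about \emph{double limits}.

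To arrange this sequential reachability by a choice of subsequence I would use Simons' equality as the transfer mechanism. Applying Theorem~\ref{th:SLS} to the shifted sequences $(x-z_k)_k$ yields, for every $x\in X$, the identity $\sup_{e\in E}(e(x)-\liminf_k e(z_k))=\sup_{x^*\in B_{X^*}}(x^*(x)-\liminf_k x^*(z_k))$, so that the behaviour of $F$ against the boundary $E$ already controls its behaviour against all of $B_{X^*}$. Feeding this into the quantitative, $\varepsilon$-interchangeable form of Grothendieck's double-limit criterion, I would select the subsequence by a diagonal procedure over scales $\varepsilon_m\downarrow 0$: at each stage one invokes Rosenthal's $\ell^1$-theorem together with the Simons transfer above to remove, up to $\varepsilon_m$, the double-limit oscillation responsible for the discrepancy between the two convex closures. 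The aim is a subsequence $F$ for which $\sigma_E$ is metrizable on the bounded convex set $\cco{F}{\sigma_E}$, the countability of $F$ making $\ncl{\lin F}$ separable and Simons' equality guaranteeing that a countable subfamily of $\lin E$ is already $\sigma_E$-determining there.

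The main obstacle is precisely the passage from sequences to nets. A point of $\cco{F}{\sigma_E}$ is \emph{a priori} only the $\sigma_E$-limit of a \emph{net} of convex combinations, whereas Corollary~\ref{cor:RS conv thm} and Theorem~\ref{th:SLS} speak about \emph{sequences}; equivalently, the gap between $\cco{F}{\sigma_E}$ and $\ncco{F}$ is the failure of the support functional $x^*\mapsto\sup_k x^*(z_k)$ to be weak$^*$ upper semicontinuous along the weak$^*$-dense subspace $\lin E$ (its lower semicontinuity being automatic). The force of the selection is to render $\sigma_E$ metrizable on the pertinent bounded set, so that net limits collapse to sequential ones; once that is secured, the reduction of the second step applies and every point of $\cco{F}{\sigma_E}$ is a weak limit of convex combinations from $F$, hence lies in $\ncco{F}$. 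This is where Simons' equality is indispensable, being the only available instrument that converts boundary control into full dual control; combined with Theorem~\ref{th:convex hull char wcomp} it is also what yields Theorem~\ref{th:moors char wc} from the present result.
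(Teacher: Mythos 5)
Your two endpoints are sound: the reduction to a sequence of distinct points (and the derivation of the ``in particular'' clause from the main assertion) is fine, and the observation that Corollary~\ref{cor:RS conv thm}, applied to the boundary $E=\ex B_{X^{*}}$, shows that the $\sigma_E$-\emph{sequential} closure of $\co F$ is always $\ncco{F}$ is exactly how the argument must close (it is the last step of the paper's proof of the analogous Theorem~\ref{th:moors wssep}; Theorem~\ref{th:moors} itself is quoted from \cite{moors}). But everything in between is missing. The whole content of the theorem is the selection of a subsequence on which $\sigma_E$-closure of the convex hull is sequentially attained, and your proposal only \emph{asserts} that this can be arranged: the claim that one can choose $F$ so that $\sigma_E$ is metrizable on $\cco{F}{\sigma_E}$, with ``Simons' equality guaranteeing that a countable subfamily of $\lin E$ is already $\sigma_E$-determining there,'' is precisely the statement to be proved, and no mechanism for producing that countable subfamily is offered. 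Simons' equality compares suprema over $E$ and over $B_{X^{*}}$ of upper limits of a \emph{fixed} sequence; it does not extract a countable determining part of $E$, and $E$ need not be weak$^{*}$-separable. This is exactly the obstruction the real proofs address: in Theorem~\ref{th:moors wssep} countability is a hypothesis on the boundary, while Moors' argument for $E=\ex B_{X^{*}}$ recovers it by passing to the separable subspace $\ncl{\lin F}$, whose dual ball is weak$^{*}$-metrizable, and extending extreme points of that dual ball to extreme points of $B_{X^{*}}$. Nothing of this kind appears in your outline.

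Two further concrete problems. First, the proposed engine---a diagonal extraction over scales $\varepsilon_m\downarrow 0$ using Rosenthal's $\ell^1$-theorem and the $\varepsilon$-interchangeable double-limit criterion ``to remove the double-limit oscillation''---cannot work, because the theorem must hold for sets on which the oscillation is irremovable. For the unit vector basis $(e_n)$ of $\ell^1$ and the sign functionals $y_m^{*}\in E=\ex B_{\ell^{\infty}}$ from the paper's final Proposition, every subsequence still exhibits double-limit oscillation equal to $2$ against $E$ itself; yet Moors' theorem applies to this set. The result is not a compactness statement and is not reachable through double limits. Second, your proposal omits the actual skeleton of the proof: one argues by contradiction, and the key step is to use the Bishop--Phelps theorem together with Hahn--Banach separation to show that if every countably infinite $F\subseteq A$ satisfied $\cco{F}{\sigma_E}\neq\ncco{F}$, then every sequence of distinct points of $A$ would admit a point $x\in\bigcap_{k=1}^{\infty}\cco{\set*{x_n:n\geq k}}{\sigma_E}\setminus\ncco{\set*{x_n:n\in\mathbb{N}}}$; a diagonal argument over the countable determining family then forces $(x_n)$ to be $\sigma_E$-convergent to $x$, and Corollary~\ref{cor:RS conv thm} yields the contradiction. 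As written, your proposal supplies only the first and last sentences of that argument.
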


The object of this paper is to give some results related to Theorem \ref{th:moors char wc} in the more general context of boundaries. In particular, we shall see, by a very slight modification of the construction from \cite{pfitzner}, that a `non-relative' version of \ref{th:moors char wc} holds for any boundary $B$ of $X$, see Theorem \ref{th:non-rel version}. \par 

Since we will also deal with some quantitative versions of Theorem \ref{th:moors char wc}, it is necessary to introduce a bit more of terminology, which stems from \cite{fabian}: Given $\varepsilon \geq 0$, a bounded subset $A$ of $X$ is said to be $\varepsilon$-weakly relatively compact (in short $\varepsilon$-WRC) provided that $\dist{x^{**}}{X}\leq \varepsilon$ for every element $x^{**}\in \cl{A}{w^{*}}$, where $w^{*}$ refers to the weak*-topology of $X^{**}$. For $\varepsilon=0$ this is equivalent to the classical case of weak relative compactness. \par 

\noindent The authors of \cite{fabian} used this notion to give a quantitative version of the well known theorem of Krein (cf. \cite[Theorem 2]{fabian}). In their proof they made use of double limit techniques in the spirit of Grothendieck. More precisely, they worked with the following definition: Let bounded subsets $A$ of $X$, $M$ of $X^{*}$ and $\varepsilon \geq 0$ be given. Then $A$ is said to have $\varepsilon$-interchangeable double limits with $M$ if for any two sequences $(x_n)_{n\in \mathbb{N}}$ in $A$ and $(x_m^{*})_{m\in \mathbb{N}}$ in $M$ we have 
\begin{equation*}
\abs*{\lim_{n\to \infty} \lim_{m\to \infty} x_m^{*}(x_n)-\lim_{m\to \infty} \lim_{n\to \infty} x_m^{*}(x_n)}\leq \varepsilon,
\end{equation*}
provided that all the limits involved exist. In this case we write $A\S \varepsilon \S M$. \par 

\noindent The connection to $\varepsilon$-WRC sets is given by the following proposition:

\begin{proposition}[Fabian et al., cf. \cite{fabian}] \label{prop:eps-WRC}
Let $A\subseteq X$ be bounded and $\varepsilon \geq 0$. Then the following hold:
\begin{enumerate}[\upshape(i)] 
\item If $A$ is $\varepsilon$-WRC, then $A\S 2\varepsilon \S B_{X^{*}}$.
\item If $A\S \varepsilon \S B_{X^{*}}$, then $A$ is $\varepsilon$-WRC.
\end{enumerate}
\end{proposition}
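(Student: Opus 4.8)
The plan is to embed $A$ into $X^{**}$ and to read off both assertions from the behaviour of weak*-cluster points. Fix sequences $(x_n)_n$ in $A$ and $(x_m^*)_m$ in $B_{X^*}$ for which all the iterated limits occurring in the definition exist, and abbreviate $L_1=\lim_n\lim_m x_m^*(x_n)$ and $L_2=\lim_m\lim_n x_m^*(x_n)$. Since $A$ is bounded and $B_{X^*}$ is weak*-compact, I may pick a weak*-cluster point $x^{**}\in\cl{A}{w^*}$ of $(x_n)$ in $X^{**}$ and a weak*-cluster point $x^*\in B_{X^*}$ of $(x_m^*)$. Because each inner limit exists, comparison with the corresponding cluster value gives $\lim_n x_m^*(x_n)=x^{**}(x_m^*)$ for every $m$, $\lim_m x_m^*(x_n)=x^*(x_n)$ for every $n$, and $\lim_n x^*(x_n)=x^{**}(x^*)$. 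Hence $L_2=\lim_m x^{**}(x_m^*)$ and $L_1=x^{**}(x^*)$, so that
\[
L_2-L_1=\lim_m x^{**}(x_m^*-x^*);
\]
the difference of the iterated limits is the value of $x^{**}$ along the weak*-null (sub)net $x_m^*-x^*$. This identity is the common engine for both parts.

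For (i) I would use the $\varepsilon$-WRC hypothesis to choose, for a given $\delta>0$, some $x\in X$ with $\norm*{x^{**}-x}\leq\varepsilon+\delta$, and split $x^{**}(x_m^*-x^*)=x(x_m^*-x^*)+(x^{**}-x)(x_m^*-x^*)$. Passing to the subnet of $(x_m^*)$ that converges weak* to $x^*$, the first summand tends to $0$ because $x\in X$, while the modulus of the second never exceeds $\norm*{x^{**}-x}\,\norm*{x_m^*-x^*}\leq 2(\varepsilon+\delta)$. Letting $\delta\to0$ gives $\abs*{L_1-L_2}\leq2\varepsilon$, i.e. $A\S 2\varepsilon \S B_{X^*}$.

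Part (ii) is the substantial direction, and here I would reverse the construction: given $x^{**}\in\cl{A}{w^*}$ with $d:=\dist{x^{**}}{X}$, I would build sequences realising a difference of iterated limits of size at least $d$. First, weak*-compactness of $\set*{F\in B_{X^{***}}:F|_X=0}$ yields $F$ in this set with $F(x^{**})=d$, and Goldstine's theorem (applied to $X^*$) makes $B_{X^*}$ dense in $B_{X^{***}}$ for $\sigma(X^{***},X^{**})$; testing against $X$ and against $x^{**}$ produces a net $(x^*_\gamma)$ in $B_{X^*}$ with $x^*_\gamma\to0$ weak* and $x^{**}(x^*_\gamma)\to d$. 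Using this net I would run an interleaving induction, alternately choosing functionals $x_m^*\in B_{X^*}$ and points $x_n\in A$ so that $\abs*{x_m^*(x_j)}<1/m$ for $j<m$, $x^{**}(x_m^*)>d-1/m$, and $\abs*{x_m^*(x_n)-x^{**}(x_m^*)}<1/n$ for $m\leq n$; the functionals come from the net above (they must be simultaneously small on the finitely many earlier points and nearly attain $d$ on $x^{**}$), the points from the weak*-density of $A$ in $\cl{A}{w^*}$. The first two requirements force $\lim_m x_m^*(x_n)=0$ for each $n$, whence $L_1=0$, and the third gives $\lim_n x_m^*(x_n)=x^{**}(x_m^*)>d-1/m$; after thinning $(x_m^*)$ to a subsequence along which these values converge (possible by boundedness and harmless for the vanishing of $L_1$) one obtains $L_2\geq d$. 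Thus $d\leq\abs*{L_1-L_2}\leq\varepsilon$ by $A\S\varepsilon \S B_{X^*}$, and as $x^{**}\in\cl{A}{w^*}$ was arbitrary, $A$ is $\varepsilon$-WRC.

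The bookkeeping of the four limits is routine once the sequences are in hand; the genuine obstacle is the simultaneous control in the inductive step of (ii), where $x_m^*$ has to be almost norming for $x^{**}$ in the sense of the distance to $X$ and at the same time uniformly small on all previously selected points of $A$. This is exactly what the Goldstine approximation of the distance-attaining functional $F\in X^{***}$ by members of $B_{X^*}$ delivers. A secondary point is ensuring that all limits in the definition really exist, which the diagonal character of the interleaving together with the single passage to a subsequence of $(x_m^*)$ takes care of.
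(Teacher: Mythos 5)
The paper gives no proof of this proposition at all --- it is quoted verbatim from \cite{fabian} --- so there is nothing internal to compare against; I can only assess your argument on its merits, and it is correct and self-contained. Part (i) is the standard quantification of Grothendieck's criterion: your identifications $L_1=x^{**}(x^{*})$ and $L_2=\lim_m x^{**}(x_m^{*})$ via cluster points are justified because the relevant inner limits are assumed to exist, the passage to a weak*-convergent subnet of $(x_m^{*})$ is harmless since the full limit $L_2-L_1=\lim_m x^{**}(x_m^{*}-x^{*})$ already exists, and the splitting through a point $x\in X$ with $\norm*{x^{**}-x}\leq\varepsilon+\delta$ gives exactly the bound $2\varepsilon$. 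Part (ii), the substantive direction, is also sound: the functional $F\in B_{X^{***}}$ with $F|_X=0$ and $F(x^{**})=\dist{x^{**}}{X}$ exists by Hahn--Banach applied to the quotient $X^{**}/X$ (or by weak*-compactness of the annihilator ball, as you say); Goldstine for $X^{*}$ then yields the net in $B_{X^{*}}$ that is weak*-null on $X$ while nearly attaining the distance at $x^{**}$; and the interleaved selection is feasible at every stage because each step imposes only finitely many open conditions, met respectively by a tail of that net and by the weak*-density of $A$ in $\cl{A}{w^{*}}$. The one point you rightly flag --- that the definition of $\S\varepsilon\S$ only constrains pairs of sequences for which \emph{all} limits exist --- is handled correctly: thinning $(x_m^{*})$ so that $x^{**}(x_m^{*})$ converges preserves the vanishing of the inner limits $\lim_m x_m^{*}(x_n)=0$, so the resulting pair is admissible and yields $\dist{x^{**}}{X}\leq L_2-L_1\leq\varepsilon$. (The degenerate case $\dist{x^{**}}{X}=0$ needs no argument.) I have no corrections.
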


\noindent In case $\varepsilon=0$ this is the classical Grothendieck double limit criterion. For various other quantitative results on weak compactness we refer the interested reader to \cite{angosto}, \cite{cascales5}, \cite{cascales3} and \cite{fabian}. For some related results on weak sequential completeness, see also \cite{kalenda}. \par 

We are now ready to formulate and prove our results. However, it should be added that all of them can easily be derived from already known results and techniques.

\section{Results and proofs}
We begin with a quantitative version of Theorem \ref{th:convex hull char wcomp}. First we prove an easy lemma that generalizes the equivalence of (ii) and (iii) in said theorem (the proof is practically the same).

\begin{lemma} \label{lemma:quan convex hull char}
Let $B$ be a subset of $B_{X^{*}}$ that separates the points of $X$ and let $(x_n)_{n\in \mathbb{N}}$ be a sequence in $X$ as well as $x\in X$ and $\varepsilon\geq 0$. Then the following assertions are equivalent:
\begin{enumerate}[\upshape(i)]
\item $x\in \bigcap_{k=1}^{\infty}\cco{\paren*{\set*{x_n:n\geq k}+\varepsilon B_X}}{\sigma_B}$.
\item $x^{*}(x)\leq \limsup_{n\to \infty}x^{*}(x_n)+\varepsilon \ \ \forall x^{*}\in B_{X^{*}}\cap \lin{B}$.
\end{enumerate}
\end{lemma}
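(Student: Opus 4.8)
The plan is to follow the classical proof of the equivalence of (ii) and (iii) in Theorem \ref{th:convex hull char wcomp}, inserting the $\varepsilon$-bookkeeping in the appropriate places. Write $C_k:=\co\paren*{\set*{x_n:n\geq k}+\varepsilon B_X}$ for each $k$, so that assertion (i) reads $x\in\bigcap_{k}\cl{C_k}{\sigma_B}$. Two standard facts do the work: first, a linear functional on $X$ is $\sigma_B$-continuous if and only if it lies in $\lin B$ (the usual description of the dual of a topology of pointwise convergence; the hypothesis that $B$ separates points makes $\sigma_B$ Hausdorff); second, the Hahn-Banach separation theorem in the locally convex space $(X,\sigma_B)$.

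For (i) $\Rightarrow$ (ii), I would fix $x^{*}\in B_{X^{*}}\cap\lin B$, which is $\sigma_B$-continuous by the first fact. Put $\alpha:=\limsup_{n\to\infty}x^{*}(x_n)$ and let $\delta>0$; choose $k$ with $x^{*}(x_n)<\alpha+\delta$ for all $n\geq k$. Since $\norm*{x^{*}}\leq 1$, every $y=x_n+\varepsilon b$ with $n\geq k$ and $b\in B_X$ satisfies $x^{*}(y)\leq\alpha+\delta+\varepsilon$. This bound passes to the convex hull $C_k$ by linearity of $x^{*}$ and then to $\cl{C_k}{\sigma_B}$ by $\sigma_B$-continuity of $x^{*}$. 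As $x\in\cl{C_k}{\sigma_B}$ this gives $x^{*}(x)\leq\alpha+\delta+\varepsilon$, and letting $\delta\to 0$ yields (ii).

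For (ii) $\Rightarrow$ (i), I would argue by contradiction: suppose $x\notin\cl{C_k}{\sigma_B}$ for some $k$. Since $\cl{C_k}{\sigma_B}$ is a $\sigma_B$-closed convex set, the separation theorem provides a $\sigma_B$-continuous functional $f$ and $\gamma\in\mathbb{R}$ with $f(x)>\gamma\geq f(y)$ for all $y\in C_k$; by the first fact $f\in\lin B$, and $f\neq 0$, so after dividing by $\norm*{f}$ we may assume $x^{*}:=f/\norm*{f}\in B_{X^{*}}\cap\lin B$ and $x^{*}(x)>\sup_{y\in C_k}x^{*}(y)$. Since $\sup_{b\in B_X}x^{*}(b)=\norm*{x^{*}}=1$, a direct computation gives $\sup_{y\in C_k}x^{*}(y)=\sup_{n\geq k}x^{*}(x_n)+\varepsilon\geq\limsup_{n\to\infty}x^{*}(x_n)+\varepsilon$, whence $x^{*}(x)>\limsup_{n\to\infty}x^{*}(x_n)+\varepsilon$, contradicting (ii).

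The routine parts are the $\limsup$ manipulation and the computation of the support functional of $C_k$; the only point needing genuine care is the identification of the $\sigma_B$-dual with $\lin B$. This is precisely what forces the functionals in (ii) to range over $B_{X^{*}}\cap\lin B$ rather than all of $B_{X^{*}}$, and what guarantees that the functional produced by the separation theorem in (ii) $\Rightarrow$ (i) can be taken inside $\lin B$ and normalized into $B_{X^{*}}\cap\lin B$.
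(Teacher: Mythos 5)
Your proposal is correct and follows essentially the same route as the paper: in (i) $\Rightarrow$ (ii) you apply the $\sigma_B$-continuous functional $x^{*}\in\lin B$ to the $\sigma_B$-closed convex sets and extract the bound $x^{*}(x)\leq\sup_{n\geq k}x^{*}(x_n)+\varepsilon$, and in (ii) $\Rightarrow$ (i) you use Hahn--Banach separation in $(X,\sigma_B)$ together with the identification $(X,\sigma_B)'=\lin B$ and normalization of the separating functional. The only cosmetic difference is that the paper lets $k\to\infty$ in the supremum bound where you insert an auxiliary $\delta>0$; both are the same argument.
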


\begin{proof}
First we assume (i). It then directly follows that
\begin{equation*}
x^{*}(x)\in \ncco{\paren*{\set*{x^{*}(x_n):n\geq k}+[-\varepsilon,\varepsilon]}} \ \forall k\in \mathbb{N} \ \forall x^{*}\in B_{X^{*}}\cap \lin{B}.
\end{equation*}
Thus we also have $x^{*}(x)\leq \sup_{n\geq k} x^{*}(x_n)+\varepsilon$ for all $k\in \mathbb{N}$ and all $x^{*}\in B_{X^{*}}\cap \lin{B}$ and the assertion (ii) follows. \par 
Now we assume that (ii) holds and take $k\in \mathbb{N}$ arbitrary. Suppose that
\begin{equation*}
 x\not\in \cco{\paren*{\set*{x_n:n\geq k}+\varepsilon B_X}}{\sigma_B}.
\end{equation*}
 Then by the separation theorem we could find a functional $x^{*}\in (X,{\sigma_B})^{\prime}=\lin{B}$ with $\norm*{x^{*}}=1$ and a number $\alpha\in \mathbb{R}$ such that
\begin{equation*}
x^{*}(y)\leq \alpha < x^{*}(x) \ \ \forall y\in \cco{\paren*{\set*{x_n:n\geq k}+\varepsilon B_X}}{\sigma_B}.
\end{equation*}
It follows that 
\begin{equation*}
\limsup_{n\to \infty}x^{*}(x_n)+\varepsilon\leq \alpha < x^{*}(x),
\end{equation*}
a contradiction which ends the proof.
\end{proof}

Now we can give a quantitative version of the first equivalence in Theorem \ref{th:convex hull char wcomp}.

\begin{theorem} \label{th:quan convex hull char wcomp}
 Let $A\subseteq X$ be bounded and $\varepsilon \geq 0$. If for each sequence $(x_n)_{n\in \mathbb{N}}$ in $A$ we have
\begin{equation}
\bigcap_{k=1}^{\infty} \ncco{\paren*{\set*{x_n:n\geq k}+\varepsilon B_X}} \neq \emptyset, \label{eq:*}
\end{equation}
then $A$ is $2\varepsilon$-WRC. \par 
\noindent If $A$ is $\varepsilon$-WRC, then 
\begin{equation}
\bigcap_{k=1}^{\infty} \ncco{\paren*{\set*{x_n:n\geq k}+r B_X}} \neq \emptyset \label{eq:**}
\end{equation}
holds for every sequence $(x_n)_{n\in \mathbb{N}}$ in $A$ and every $r>\varepsilon$.
\end{theorem}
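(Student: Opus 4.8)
The plan is to reduce both assertions to the analytic reformulation provided by Lemma~\ref{lemma:quan convex hull char}, applied with $B=B_{X^{*}}$. Since $\lin{B_{X^{*}}}=X^{*}$ and the topology $\sigma_{B_{X^{*}}}$ of pointwise convergence on $B_{X^{*}}$ coincides with the weak topology (the generating seminorms differ only by scaling), Mazur's theorem identifies $\cco{C}{\sigma_{B_{X^{*}}}}$ with $\ncl{C}$ for convex $C$. Hence membership of a point $x$ in $\bigcap_{k}\ncco{\paren*{\set*{x_n:n\ge k}+\delta B_X}}$ is, by the lemma, equivalent to the single condition $x^{*}(x)\le\limsup_n x^{*}(x_n)+\delta$ for all $x^{*}\in B_{X^{*}}$. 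Both halves of the theorem will be handled through this equivalence.

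For the second assertion (the one starting from $\varepsilon$-WRC) I would argue directly. Given a sequence $(x_n)$ in the $\varepsilon$-WRC set $A$ and some $r>\varepsilon$, pick by Banach--Alaoglu a weak*-cluster point $x^{**}$ of $(x_n)$ in $X^{**}$. Then $x^{**}\in\cl{A}{w^{*}}$, so $\dist{x^{**}}{X}\le\varepsilon<r$ and there is $x\in X$ with $\norm*{x^{**}-x}<r$. For every $x^{*}\in B_{X^{*}}$ a cluster value of a real sequence never exceeds its $\limsup$, so $x^{**}(x^{*})\le\limsup_n x^{*}(x_n)$; combining this with $\abs*{x^{*}(x)-x^{**}(x^{*})}<r$ yields $x^{*}(x)\le\limsup_n x^{*}(x_n)+r$. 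By the equivalence above, this $x$ lies in every set $\ncco{\paren*{\set*{x_n:n\ge k}+rB_X}}$, so \eqref{eq:**} holds.

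For the first assertion the cleanest route seems to pass through $\varepsilon$-interchangeable double limits and Proposition~\ref{prop:eps-WRC}(ii): I would show that \eqref{eq:*} forces $A\S 2\varepsilon\S B_{X^{*}}$, whence $A$ is $2\varepsilon$-WRC. So take sequences $(x_n)$ in $A$ and $(x_m^{*})$ in $B_{X^{*}}$ for which $a=\lim_n\lim_m x_m^{*}(x_n)$ and $b=\lim_m\lim_n x_m^{*}(x_n)$ exist, and apply \eqref{eq:*} to $(x_n)$ to obtain a single $x\in X$ with $x^{*}(x)\le\limsup_n x^{*}(x_n)+\varepsilon$ for all $x^{*}\in B_{X^{*}}$. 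Testing this against each $x_m^{*}$ gives $\abs*{x_m^{*}(x)-\lim_n x_m^{*}(x_n)}\le\varepsilon$, so every cluster value of $(x_m^{*}(x))_m$ is within $\varepsilon$ of $b$. The \emph{key device} is to introduce a weak*-cluster point $y^{*}\in B_{X^{*}}$ of the functionals $(x_m^{*})$: for each fixed $n$ the number $y^{*}(x_n)$ must equal the existing limit $\lim_m x_m^{*}(x_n)$, so $\lim_n y^{*}(x_n)=a$, and applying the condition to $y^{*}$ gives $\abs*{y^{*}(x)-a}\le\varepsilon$. Since $y^{*}(x)$ is itself a cluster value of $(x_m^{*}(x))_m$, it lies within $\varepsilon$ of $b$, and the triangle inequality delivers $\abs*{a-b}\le 2\varepsilon$.

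The main obstacle, and the source of the factor $2$, is precisely that a single use of \eqref{eq:*} only pins the candidate $x$ against the functionals $x_m^{*}$, i.e. to the $b$-side of the double array; converting the iterated limit $a$, which runs over the elements $x_n$, into something testable by the hypothesis is exactly what forces the replacement of $(x_m^{*})$ by its weak*-cluster point $y^{*}$, at the cost of one extra $\varepsilon$. Once $A\S 2\varepsilon\S B_{X^{*}}$ is in hand, Proposition~\ref{prop:eps-WRC}(ii) immediately yields that $A$ is $2\varepsilon$-WRC. The only routine points to verify along the way are the existence of weak*-cluster points of bounded sequences in $X^{*}$ and $X^{**}$ and the identification of the relevant cluster values with the prescribed inner limits.
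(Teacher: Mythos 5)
Your proposal is correct and follows essentially the same route as the paper: the second assertion via a weak*-cluster point of $(x_n)$ in $X^{**}$ together with Lemma~\ref{lemma:quan convex hull char}, and the first assertion by establishing $A\S 2\varepsilon\S B_{X^{*}}$ through exactly the same weak*-cluster-point device for $(x_m^{*})$ and then invoking Proposition~\ref{prop:eps-WRC}(ii). The only cosmetic difference is that you make explicit the identification (via Mazur) of the $\sigma_{B_{X^{*}}}$-closed convex hull with the norm-closed convex hull, and the two-sided estimate $\abs*{x_m^{*}(x)-\lim_n x_m^{*}(x_n)}\le\varepsilon$ tacitly uses the hypothesis for $-x_m^{*}$ as well, just as the paper's inequality (1) does.
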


\begin{proof}
First we assume that \eqref{eq:*} holds for every sequence in $A$. Let $(x_n)_{n\in \mathbb{N}}$ and $(x_m^{*})_{m\in \mathbb{N}}$ be sequences in $A$ and $B_{X^{*}}$, respectively, such that the limits 
\begin{equation*}
\lim_{n\to \infty}\lim_{m\to \infty} x_m^{*}(x_n) \ \text{and} \ \lim_{m\to \infty}\lim_{n\to \infty} x_m^{*}(x_n)
\end{equation*}
exist. By assumption, we can pick an element
\begin{equation*}
x\in \bigcap_{k=1}^{\infty} \ncco{\paren*{\set*{x_n:n\geq k}+\varepsilon B_X}}.
\end{equation*}
From Lemma \ref{lemma:quan convex hull char} we conclude that 
\begin{equation}
\liminf_{n\to \infty}x^{*}(x_n)-\varepsilon \leq x^{*}(x) \leq \limsup_{n\to \infty}x^{*}(x_n)+\varepsilon \ \ \forall x^{*}\in B_{X^{*}}. \label{eq:1}
\end{equation}
It follows that 
\begin{equation}
\abs*{x_m^{*}(x)-\lim_{n\to \infty}x_m^{*}(x_n)}\leq \varepsilon \ \ \forall m\in \mathbb{N}. \label{eq:2}
\end{equation}
Now take a weak*-cluster point $x^{*}\in B_{X^{*}}$ of the sequence $(x_m^{*})_{m\in \mathbb{N}}$. Then 
\begin{equation}
\lim_{n\to \infty}\lim_{m\to \infty}x_m^{*}(x_n)=\lim_{n\to \infty}x^{*}(x_n). \label{eq:3}
\end{equation}
By \eqref{eq:1} we have 
\begin{equation}
\abs*{x^{*}(x)-\lim_{n\to \infty}x^{*}(x_n)}\leq \varepsilon. \label{eq:4}
\end{equation}
Since $x^{*}(x)-\lim_{m\to \infty}\lim_{n\to \infty}x_m^{*}(x_n)$ is a cluster point of the sequence $(x_m^{*}(x)-\lim_{n\to \infty}x_m^{*}(x_n))_{m\in \mathbb{N}}$ it follows from \eqref{eq:2} that
\begin{equation}
\abs*{x^{*}(x)-\lim_{m\to \infty}\lim_{n\to \infty}x_m^{*}(x_n)} \leq \varepsilon. \label{eq:5}
\end{equation}
From \eqref{eq:3}, \eqref{eq:4} and \eqref{eq:5} we get 
\begin{equation*}
\abs*{\lim_{m\to \infty}\lim_{n\to \infty}x_m^{*}(x_n)-\lim_{n\to \infty}\lim_{m\to \infty}x_m^{*}(x_n)}\leq 2\varepsilon.
\end{equation*}
Thus we have proved $A\S 2\varepsilon \S B_{X^{*}}$. Hence, by Proposition \ref{prop:eps-WRC}, $A$ is $2\varepsilon$-WRC. \par 
Now assume that $A$ is $\varepsilon$-WRC and take any sequence $(x_n)_{n\in \mathbb{N}}$ in $A$ as well as $r> \varepsilon$. Let $x^{**}\in \cl{A}{w^{*}}$ be a weak*-cluster point of $(x_n)_{n\in \mathbb{N}}$. Since $A$ is $\varepsilon$-WRC there is some $x\in X$ such that $\norm*{x^{**}-x}\leq r$. \par 
For every $x^{*}\in B_{X^{*}}$ the number $x^{**}(x^{*})$ is a cluster point of the sequence $(x^{*}(x_n))_{n\in \mathbb{N}}$ and thus 
\begin{equation*}
x^{*}(x)\leq \norm*{x-x^{**}}\norm*{x^{*}}+x^{**}(x^{*})\leq r+\limsup_{n\to \infty}x^{*}(x_n).
\end{equation*}
Lemma \ref{lemma:quan convex hull char} now yields
\begin{equation*}
x\in \bigcap_{k=1}^{\infty}\ncco{\paren*{\set*{x_n:n\geq k}+r B_X}}
\end{equation*}
and the proof is finished.
\end{proof}

As an immediate corollary we get

\begin{corollary} \label{cor:quan convex hull char wcomp}
If $A\subseteq X$ is bounded and $\varepsilon\geq 0$ such that
\begin{equation*}
\bigcap_{k=1}^{\infty}\paren*{\ncco{\set*{x_n:n\geq k}}+\varepsilon B_X} \neq \emptyset
\end{equation*}
for every sequence $(x_n)_{n\in \mathbb{N}}$ in $A$, then $A$ is $2\varepsilon$-WRC.
\end{corollary}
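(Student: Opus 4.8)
The plan is to deduce this directly from the first part of Theorem \ref{th:quan convex hull char wcomp} by comparing the two types of enlarged convex hulls appearing in the respective hypotheses. Writing $S_k=\set*{x_n:n\geq k}$, the hypothesis here involves the sets $\ncco{S_k}+\varepsilon B_X$, whereas condition \eqref{eq:*} involves $\ncco{\paren*{S_k+\varepsilon B_X}}$. Thus the whole statement reduces to the set inclusion
\begin{equation*}
\ncco{S_k}+\varepsilon B_X\subseteq \ncco{\paren*{S_k+\varepsilon B_X}}\quad\forall k\in\mathbb{N}.
\end{equation*}

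First I would establish this inclusion. At the level of plain convex hulls one has $\co A+\co C=\co(A+C)$ for any subsets $A,C\subseteq X$: the inclusion ``$\subseteq$'' follows by expanding a product of two convex combinations into a single convex combination of sums, while ``$\supseteq$'' is immediate because $\co A+\co C$ is convex and contains $A+C$. Passing to norm closures and using the elementary fact $\ncl{U}+\ncl{V}\subseteq\ncl{U+V}$, together with $\varepsilon B_X$ being already closed and convex (so that $\ncl{\co(\varepsilon B_X)}=\varepsilon B_X$), yields
\begin{equation*}
\ncco{S_k}+\varepsilon B_X=\ncl{\co S_k}+\ncl{\co(\varepsilon B_X)}\subseteq \ncl{\co S_k+\co(\varepsilon B_X)}=\ncl{\co(S_k+\varepsilon B_X)}=\ncco{\paren*{S_k+\varepsilon B_X}},
\end{equation*}
which is exactly the desired inclusion.

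With the inclusion in hand, the rest is automatic. Given any sequence $(x_n)_{n\in\mathbb{N}}$ in $A$, the hypothesis supplies a point in $\bigcap_{k=1}^{\infty}\paren*{\ncco{S_k}+\varepsilon B_X}$; by the inclusion just proved, this same point lies in $\bigcap_{k=1}^{\infty}\ncco{\paren*{S_k+\varepsilon B_X}}$, so condition \eqref{eq:*} holds for every sequence in $A$. Applying the first part of Theorem \ref{th:quan convex hull char wcomp} then shows that $A$ is $2\varepsilon$-WRC.

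I do not expect any serious obstacle here; the only point requiring a little care is the Minkowski-sum identity for convex hulls and its behaviour under passage to closures, but both are standard, and once the inclusion is noted the result is genuinely an immediate corollary.
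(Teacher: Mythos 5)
Your proof is correct and fills in exactly the step the paper leaves implicit: the corollary's hypothesis implies condition \eqref{eq:*} via the inclusion $\ncco{\set*{x_n:n\geq k}}+\varepsilon B_X\subseteq \ncco{\paren*{\set*{x_n:n\geq k}+\varepsilon B_X}}$, after which the first part of Theorem \ref{th:quan convex hull char wcomp} applies. The paper offers no separate proof (it calls this an immediate corollary), and your argument is the intended one.
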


Now we can also prove a quantitative version of Theorem \ref{th:moors char wc}:

\begin{corollary} \label{cor:quan version moors char wc}
Let $A\subseteq X$ be bounded, $\varepsilon\geq 0$ and $E=\ex B_{X^{*}}$. If for each sequence $(x_n)_{n\in \mathbb{N}}$ in $A$ we have that 
\begin{equation*}
\bigcap_{k=1}^{\infty} \paren*{\cco{\set*{x_n:n\geq k}}{\sigma_E}+\varepsilon B_X} \neq \emptyset,
\end{equation*}
then $A$ is $2\varepsilon$-WRC.
\end{corollary}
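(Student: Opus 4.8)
The plan is to reduce the statement to Corollary \ref{cor:quan convex hull char wcomp}, whose hypothesis asks that $\bigcap_{k=1}^\infty(\ncco{\set*{x_n:n\geq k}}+\varepsilon B_X)\neq\emptyset$ for every sequence in $A$. The present hypothesis is formally weaker, since $\sigma_E$-closures are larger than norm-closures, so it cannot be invoked directly; the point is to use Moors' Theorem \ref{th:moors} to trade each $\sigma_E$-closed convex hull for a norm-closed one. Fix a sequence $(x_n)_{n\in\mathbb{N}}$ in $A$. If its range is finite, a constant subsequence at once produces a point belonging to every $\ncco{\set*{x_n:n\geq k}}$, so I would assume the range infinite and concentrate on that case.

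The heart of the argument is a diagonal construction built on the subsequence form of Theorem \ref{th:moors}. I would apply that form repeatedly to the successive tails: at step $k$, apply it to the tail (indices $>n_{k-1}$) of the subsequence obtained so far, extracting a further infinite subsequence indexed by a set $M_k$ with $M_k\subseteq M_{k-1}$ and $\min M_k>n_{k-1}$, whose $\sigma_E$-closed convex hull is contained in the norm-closed convex hull of that tail, hence in $\ncco{\set*{x_n:n\geq n_{k-1}+1}}$. Setting $n_k=\min M_k$, the nesting $M_k\subseteq M_{k-1}$ forces $n_j\in M_j\subseteq M_k$ for $j\geq k$, so that $\set*{x_{n_j}:j\geq k}\subseteq\set*{x_m:m\in M_k}$ and therefore
\[
\cco{\set*{x_{n_j}:j\geq k}}{\sigma_E}\subseteq \ncco{\set*{x_n:n\geq n_{k-1}+1}}\qquad(k\in\mathbb{N}),
\]
with $n_{k-1}+1\to\infty$.

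With the subsequence $(x_{n_j})_{j\in\mathbb{N}}$ in hand I would apply the hypothesis to it (it is again a sequence in $A$), obtaining a point $x\in\bigcap_{k}\paren*{\cco{\set*{x_{n_j}:j\geq k}}{\sigma_E}+\varepsilon B_X}$. Feeding in the displayed inclusion shows $x\in\ncco{\set*{x_n:n\geq n_{k-1}+1}}+\varepsilon B_X$ for every $k$; since the lower indices $n_{k-1}+1$ tend to infinity and the tails are nested, $x$ in fact lies in $\ncco{\set*{x_n:n\geq M}}+\varepsilon B_X$ for every $M$. Thus $\bigcap_{k=1}^\infty(\ncco{\set*{x_n:n\geq k}}+\varepsilon B_X)\neq\emptyset$, and as $(x_n)$ was arbitrary, Corollary \ref{cor:quan convex hull char wcomp} yields that $A$ is $2\varepsilon$-WRC.

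The main obstacle is exactly the mismatch of scope: the available forms of Moors' theorem control only the convex hull of one entire (sub)sequence, whereas both the hypothesis and the target condition quantify over all tails $\set*{x_n:n\geq k}$. The diagonal extraction is what bridges this, and its two delicate points are keeping the index sets nested — so that every tail of the final subsequence sits inside the corresponding $M_k$ — and forcing the lower endpoints $\min M_k$ to increase, so that the trapping norm-closed convex hulls genuinely shrink down the tails of the original sequence.
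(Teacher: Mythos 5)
Your proof is correct and follows essentially the same route as the paper: the paper likewise invokes Theorem \ref{th:moors} together with ``an easy diagonal argument'' to extract a subsequence with $\cco{\set*{x_{n_j}:j\geq k}}{\sigma_E}\subseteq\ncco{\set*{x_n:n\geq k}}$ for all $k$, and then concludes via Corollary \ref{cor:quan convex hull char wcomp}. You merely write out in full the diagonal extraction that the paper leaves to the reader.
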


\begin{proof}
Let $(x_n)_{n\in \mathbb{N}}$ be a sequence in $A$. By means of Theorem \ref{th:moors} and an easy diagonal argument we can find a subsequence $(x_{n_k})_{k\in \mathbb{N}}$ such that $\cco{\set*{x_{n_k}:k\geq l}}{\sigma_E}\subseteq \ncco{\set*{x_n:n\geq l}}$ for all $l$ (compare \cite[Corollary 0.2]{moors}). It then follows from our assumption that 
\begin{equation*}
\bigcap_{l=1}^{\infty} \paren*{\ncco{\set*{x_n:n\geq l}+\varepsilon B_X}} \neq \emptyset.
\end{equation*}
Hence, by Corollary \ref{cor:quan convex hull char wcomp}, $A$ is $2\varepsilon$-WRC.
\end{proof}

Next we observe that Moors' Theorem \ref{th:moors} does not only work for the extreme points of $B_{X^{*}}$ but also for any weak*-separable boundary.

\begin{theorem} \label{th:moors wssep}
Let $B$ be a weak*-separable boundary for $X$ and $A$ a bounded infinite subset of $X$. Then there is a countably infinite set $F\subseteq A$ such that $\ncco{F}=\cco{F}{\sigma_B}$. In particular, for every bounded sequence $(x_n)_{n\in \mathbb{N}}$ in $X$ there exists a subsequence $(x_{n_k})_{k\in \mathbb{N}}$ with $\cco{\set*{x_{n_k}:k\in \mathbb{N}}}{\sigma_B}\subseteq \ncco{\set*{x_n:n\in \mathbb{N}}}$.
\end{theorem}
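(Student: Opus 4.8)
The inclusion $\ncco F\subseteq\cco F{\sigma_B}$ is automatic for every $F\subseteq A$, since $\sigma_B$ is coarser than the norm topology; the whole problem is therefore to produce a countable infinite $F$ for which the reverse inclusion $\cco F{\sigma_B}\subseteq\ncco F$ holds. The plan is to run the argument that Moors uses for Theorem \ref{th:moors} (which I would take as a template), replacing the extreme boundary $E$ by $B$ and using weak*-separability wherever Moors exploits the special structure of $\ex B_{X^*}$. To that end I would first fix a sequence $(b_j)_{j\in\mathbb N}$ in $B$ that is weak*-dense in $B$. Since $b\mapsto b(x)$ is weak*-continuous and $B$ is a boundary, a weak*-dense subset $D=\set*{b_j:j\in\mathbb N}$ of $B$ satisfies $\sup_j b_j(x)=\norm*{x}$ for all $x$; in particular the $b_j$ separate the points of $X$. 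Hence the topology $\sigma_D$ of pointwise convergence on $D$ is generated by the countable separating family of seminorms $x\mapsto\abs*{b_j(x)}$ and is therefore metrizable. This is the concrete benefit of weak*-separability, and it is what will let me replace nets by sequences in the selection of $F$.

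Given a candidate countable $F\subseteq A$, I would attempt $\cco F{\sigma_B}\subseteq\ncco F$ as follows. Let $y\in\cco F{\sigma_B}$. Because $\sigma_B$ is finer than $\sigma_D$ we have $y\in\cco F{\sigma_D}$, and since $\sigma_D$ is metrizable there is a sequence $(y_i)_{i\in\mathbb N}$ in $\co F$ with $b_j(y_i)\to b_j(y)$ for every $j$. If I can arrange that $(y_i)$ converges to $y$ in the full topology $\sigma_B$, then Corollary \ref{cor:RS conv thm} (Rainwater--Simons, available because $B$ is a boundary) forces $y_i\to y$ weakly; as each $y_i$ lies in the weakly closed set $\ncco F$, this gives $y\in\ncco F$, as desired.

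The step I expect to be the main obstacle is exactly this bridge from the countable dense set $D$ to the whole boundary $B$: pointwise convergence $b_j(y_i)\to b_j(y)$ along $D$ does not by itself give $b(y_i)\to b(y)$ for all $b\in B$, since neither $\sup$ nor $\limsup$ is weak*-continuous. Indeed, if $y\notin\ncco F$ one separates $y$ from $\ncco F$ by some $x^*\in S_{X^*}$ and obtains $x^*(y_i-y)\le-\delta<0$ for all $i$; applying Simons' equality (Theorem \ref{th:SLS}) to the bounded sequence $w_i:=y_i-y$ then only yields $\sup_{b\in B}\limsup_i b(w_i)\ge\delta>0$, i.e. the double-limit gap is witnessed by some $b\in B\setminus D$, which is perfectly consistent with $b_j(w_i)\to0$ along $D$. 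To defeat this I would not take $F$ arbitrary but construct it by a diagonal selection from an initial sequence in $A$, choosing successive terms so that along $F$ the weak*-approximation of members of $B$ by the $b_j$ is uniform enough to force $\limsup_i b(w_i)$ to be controlled simultaneously, for every $b\in B$, by the values on $D$. This countable selection is precisely the analogue of the role played by the extreme-point structure in Moors' construction, and weak*-separability of $B$ is what makes it available; once $F$ is so chosen, Simons' equality upgrades the $\sigma_D$-control to the $\sigma_B$-control required in the previous paragraph and the argument closes.

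Finally, the ``in particular'' is immediate. Given a bounded sequence $(x_n)_{n\in\mathbb N}$, the claim is trivial if $\set*{x_n:n\in\mathbb N}$ is finite; otherwise I apply the first part to $A=\set*{x_n:n\in\mathbb N}$ to obtain a countable infinite $F\subseteq\set*{x_n:n\in\mathbb N}$ with $\cco F{\sigma_B}=\ncco F$. Enumerating $F$ as a subsequence $(x_{n_k})_{k\in\mathbb N}$ and using $F\subseteq\set*{x_n:n\in\mathbb N}$ gives $\cco{\set*{x_{n_k}:k\in\mathbb N}}{\sigma_B}=\ncco F\subseteq\ncco{\set*{x_n:n\in\mathbb N}}$, which is the asserted subsequence statement.
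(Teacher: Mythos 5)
There is a genuine gap, and it sits exactly where you locate it. Your proposal correctly reduces the problem to bridging from the countable weak*-dense set $D=\set*{b_j:j\in\mathbb N}$ to all of $B$, but the paragraph that is supposed to close this bridge (``construct $F$ by a diagonal selection \dots\ so that the weak*-approximation of members of $B$ by the $b_j$ is uniform enough'') is a description of a hoped-for property, not a construction: no selection criterion is given, and it is not clear that any diagonal extraction can make pointwise weak*-approximation ``uniform along $F$'' in the sense you would need. The paper's actual mechanism is different in structure: it argues by \emph{contradiction}, assuming that for \emph{every} countably infinite $F\subseteq A$ there is a point of $\cco{F}{\sigma_B}\setminus\ncco{F}$. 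Running Moors' Bishop--Phelps/Hahn--Banach argument under this hypothesis yields the strong statement that for every sequence $(x_n)$ in $A$ with infinite range the set $\bigcap_{k}\cco{\set*{x_n:n\geq k}}{\sigma_B}\setminus\ncco{\set*{x_n:n\in\mathbb N}}$ is nonempty. This nonemptiness is then exploited \emph{twice}: once to produce, after a diagonal extraction making $\lim_n x_m^{*}(x_n)$ exist for all $m$, a limit candidate $x$ with $x_m^{*}(x)=\lim_n x_m^{*}(x_n)$; and a second time, applied to a subsequence $(x_{n_k})$ along which some $x^{*}\in B$ allegedly fails to converge to $x^{*}(x)$, to produce a point $z\in\bigcap_l\cco{\set*{x_{n_k}:k\geq l}}{\sigma_B}$ satisfying $x_m^{*}(z)=x_m^{*}(x)$ for all $m$ (hence $x^{*}(z)=x^{*}(x)$ by weak*-density) while also $x^{*}(z)\geq\varepsilon+x^{*}(x)$ --- a contradiction. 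Your sketch has no substitute for this second use: without the contradiction hypothesis the intersection of $\sigma_B$-closed convex hulls of tails of the subsequence has no reason to be nonempty, and that nonemptiness is precisely what converts $\sigma_D$-information into $\sigma_B$-information.

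Two smaller points. First, you apply the Rainwater--Simons corollary to a sequence $(y_i)$ of convex combinations approximating an arbitrary $y\in\cco{F}{\sigma_B}$; the paper instead proves that the extracted sequence $(x_n)$ itself is $\sigma_B$-convergent to $x$, applies Corollary \ref{cor:RS conv thm} to get weak convergence, and concludes $x\in\ncco{\set*{x_n:n\in\mathbb N}}$, contradicting the choice of $x$ from the nonempty difference set --- this is both simpler and avoids having to control convex combinations. Second, your derivation of the ``in particular'' clause from the main statement is correct and matches the intended reading.
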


\begin{proof}
The proof is completely analogous to that of Theorem \ref{th:moors} given in \cite{moors}, in fact it is even simpler, so we shall only sketch it. Arguing by contradiction, we suppose that for each countably infinite subset $F$ of $A$ there is an element $z\in \cco{F}{\sigma_B}\setminus \ncco{F}$. \par 
 Then we can show exactly as in \cite{moors} (using the Bishop-Phelps theorem (cf. \cite[Theorem 5.5]{fonf}) and the Hahn-Banach separation theorem) that for every sequence $(x_n)_{n\in \mathbb{N}}$ in $A$ for which the set $\set*{x_n:n\in \mathbb{N}}$ is infinite, there is an element
\begin{equation}
x\in \bigcap_{k=1}^{\infty} \cco{\set*{x_n:n\geq k}}{\sigma_B} \setminus \ncco{\set*{x_n:n\in \mathbb{N}}}. \label{eq:7}
\end{equation} \par 
\noindent We remark that the weak*-separability of $B$ is not needed for this step. \par 
Next we fix a sequence $(x_n)_{n\in \mathbb{N}}$ in $A$ whose members are distinct and a countable weak*-dense subset $\set*{x_m^{*}:m\in \mathbb{N}}$ of $B$. By the usual diagonal argument we may select a subsequence (again denoted by $(x_n)_{n\in \mathbb{N}}$) such that $\lim_{n\to \infty} x_m^{*}(x_n)$ exists for all $m$. \par 
We then choose an element $x$ according to \eqref{eq:7} and conclude that for each $m\in \mathbb{N}$ we have $\lim_{n\to \infty}x_m^{*}(x_n)=x_m^{*}(x)$. \par 
Now let $x^{*}\in B$ be arbitrary. Again as in \cite{moors} we will show that $\lim_{n\to \infty}x^{*}(x_n)=x^{*}(x)$. Suppose that this is not the case. Then there is an $\varepsilon >0$ such that $\abs*{x^{*}(x)-x^{*}(x_n)}>\varepsilon$ for infinitely many $n\in \mathbb{N}$. Let us assume $x^{*}(x_n)>\varepsilon+x^{*}(x)$ for infinitely many $n$ and arrange these indices in an increasing sequence $(n_k)_{k\in \mathbb{N}}$. By \eqref{eq:7} we can find 
\begin{equation*}
z\in \bigcap_{l=1}^{\infty} \cco{\set*{x_{n_k}:k\geq l}}{\sigma_B}\setminus \ncco{\set*{x_{n_k}:k\in \mathbb{N}}}.
\end{equation*}
It follows that $x_m^{*}(z)=\lim_{k\to \infty}x_m^{*}(x_{n_k})=x_m^{*}(x)$ for all $m$ and since $\set*{x_m^{*}:m\in \mathbb{N}}$ is weak*-dense in $B$ this implies $x^{*}(x)=x^{*}(z)$, whereas on the other hand $x^{*}(z)\geq \varepsilon+x^{*}(x)$, a contradiction. \par Thus $(x_n)_{n\in \mathbb{N}}$ is $\sigma_B$-convergent to $x$ and hence, by Corollary \ref{cor:RS conv thm} it is also weakly convergent to $x$, which in turn implies $x\in \ncco{\set*{x_n:n\in \mathbb{N}}}$, contradicting the choice of $x$.
\end{proof}

 Note that the assumption of weak*-separability of $B$ is fulfilled, in particular, if $X$ is separable, for then the weak*-topology on $B_{X^{*}}$ is metrizable. As an immediate corollary we get \ref{cor:quan version moors char wc} for weak*-separable boundaries.

\begin{corollary} \label{cor:moors quan wssep}
Let $B$ be a boundary for $X$ and $A$ a bounded subset of $X$ as well as $\varepsilon \geq 0$. If $B$ is weak*-separable (in particular, if $X$ is separable) and for each sequence $(x_n)_{n\in \mathbb{N}}$ in $A$ we have
\begin{equation*}
\bigcap_{k=1}^{\infty} \paren*{\cco{\set*{x_n:n\geq k}}{\sigma_B}+\varepsilon B_X} \neq \emptyset,
\end{equation*}
then $A$ is $2\varepsilon$-WRC.
\end{corollary}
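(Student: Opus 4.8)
The plan is to follow the proof of Corollary \ref{cor:quan version moors char wc} essentially word for word, the only difference being that the weak*-separability of $B$ now permits me to invoke Theorem \ref{th:moors wssep} in place of Theorem \ref{th:moors}. The aim is to trade the $\sigma_B$-hypothesis for the purely norm-topological condition appearing in Corollary \ref{cor:quan convex hull char wcomp}, and then simply quote that corollary to conclude that $A$ is $2\varepsilon$-WRC.

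Concretely, I would fix an arbitrary sequence $(x_n)_{n\in\mathbb{N}}$ in $A$ and aim to show that
\[
\bigcap_{l=1}^{\infty}\paren*{\ncco{\set*{x_n:n\geq l}}+\varepsilon B_X}\neq\emptyset;
\]
once this is verified for every such sequence, Corollary \ref{cor:quan convex hull char wcomp} yields the assertion. If the range $\set*{x_n:n\in\mathbb{N}}$ is finite the claim is trivial, since a constant subsequence already works, so I may assume it is infinite. Combining Theorem \ref{th:moors wssep} with the same diagonal argument used for Theorem \ref{th:moors} (compare \cite[Corollary 0.2]{moors}), I would extract a subsequence $(x_{n_k})_{k\in\mathbb{N}}$ with
\[
\cco{\set*{x_{n_k}:k\geq l}}{\sigma_B}\subseteq\ncco{\set*{x_n:n\geq l}}\quad\text{for all }l.
\]
Applying the hypothesis of the corollary to this subsequence, which is again a sequence in $A$, gives
\[
\bigcap_{l=1}^{\infty}\paren*{\cco{\set*{x_{n_k}:k\geq l}}{\sigma_B}+\varepsilon B_X}\neq\emptyset,
\]
and since the right-hand side of the inclusion refers directly to tails of the original sequence, the displayed norm-closure intersection is then nonempty as well.

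I do not expect a genuine obstacle here, because all the substantive work has already been isolated in Theorem \ref{th:moors wssep}, whose proof is what transfers Moors' construction from $E=\ex B_{X^{*}}$ to an arbitrary weak*-separable boundary. The only point demanding a little care is the diagonalization producing the nested inclusions with correctly matched indices on both sides (using $n_k\geq k$ so that tails of the subsequence sit inside tails of the original sequence); but this is exactly the step already performed in the proof of Corollary \ref{cor:quan version moors char wc} and requires nothing beyond the stated form of Theorem \ref{th:moors wssep}.
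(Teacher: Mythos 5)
Your proposal is correct and is exactly the paper's intended argument: the paper's proof of this corollary consists of the single line ``Exactly as the proof of Corollary \ref{cor:quan version moors char wc}'', i.e.\ run that proof verbatim with Theorem \ref{th:moors wssep} substituted for Theorem \ref{th:moors} and then invoke Corollary \ref{cor:quan convex hull char wcomp}. Your write-up just makes the details (the diagonalization and the trivial finite-range case) explicit.
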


\begin{proof}
Exactly as the proof of Corollary \ref{cor:quan version moors char wc}.
\end{proof}

Let us now consider Banach spaces of a certain type, namely the case $X=C(K)$ for some compact Hausdorff space $K$ or $X=\ell^1(I)$ for some index set $I$. In \cite{cascales1} respectively \cite{cascales2} Cascales et al. found the positive solution to the Boundary Problem for these types of spaces. In fact, they even proved a stronger statement, namely that in the above cases the space $(X,\sigma_B)$ is angelic\footnote{See \cite{cascales1} or \cite{floret} for the definition and background.} for every boundary $B$ of $X$. In order to get the statement of Corollary \ref{cor:moors quan wssep} for arbitrary boundaries in $C(K)$- and $\ell^1(I)$-spaces we shall need the following easy lemma.

\begin{lemma} \label{lemma:unifying lemma}
Let $T$ and $S$ be subsets of $X^{*}$ such that for every countable set $D\subseteq X$ and every $x^{*}\in T$ there is some $y^{*}\in S$ such that $x^{*}(x)=y^{*}(x)$ for all $x\in D$. \par \noindent
Then for every countable set $D\subseteq X$ we have $\cco{D}{\sigma_S} \subseteq \cco{D}{\sigma_T}$.
\end{lemma}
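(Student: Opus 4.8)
The plan is to pass to the topological duals of the topologies $\sigma_S$ and $\sigma_T$ and recast the desired inclusion as a separation statement. Recall that for any $B\subseteq X^{*}$ the locally convex space $(X,\sigma_B)$ has topological dual exactly $\lin B$---a fact already used in the proof of Lemma \ref{lemma:quan convex hull char}. Hence, by the Hahn--Banach separation theorem, for every countable $D\subseteq X$ one has the description
\[
\cco{D}{\sigma_T}=\set*{z\in X : f(z)\leq \sup_{d\in D}f(d)\ \ \forall f\in \lin T},
\]
and the analogous description with $S$ in place of $T$. It therefore suffices to check that every $z$ satisfying the inequalities coming from $\lin S$ also satisfies those coming from $\lin T$.

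First I would lift the hypothesis from single functionals to their linear spans: for \emph{any} countable set $C\subseteq X$ and any $f=\sum_{i=1}^{n}\lambda_i x_i^{*}\in \lin T$, applying the assumption to each $x_i^{*}$ and to $C$ yields $y_i^{*}\in S$ with $x_i^{*}=y_i^{*}$ on $C$, so that $g=\sum_{i=1}^{n}\lambda_i y_i^{*}\in \lin S$ agrees with $f$ on all of $C$. Thus the matching property of the statement passes verbatim from the pair $(T,S)$ to the pair $(\lin T,\lin S)$.

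The core of the argument is then short. Fix a countable $D$ and let $z\in \cco{D}{\sigma_S}$. Given $f\in \lin T$, apply the lifted matching property to the countable set $C=D\cup\set*{z}$ to obtain $g\in \lin S$ with $g=f$ on $D\cup\set*{z}$. Since $z$ lies in the $\sigma_S$-closed convex hull of $D$ and $g\in \lin S=(X,\sigma_S)^{\prime}$, the description above gives $g(z)\leq \sup_{d\in D}g(d)$; as $g$ and $f$ coincide on $D\cup\set*{z}$, this reads $f(z)=g(z)\leq \sup_{d\in D}g(d)=\sup_{d\in D}f(d)$. Since $f\in \lin T$ was arbitrary, the description for $\sigma_T$ yields $z\in \cco{D}{\sigma_T}$, which is the asserted inclusion.

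The point deserving attention---and the only genuine obstacle---is that a closed convex hull is governed not by the generating family but by its full linear span, which is the relevant topological dual, whereas the hypothesis provides matching only for single members of $T$. This is overcome by the elementary linearity step of the second paragraph, together with the observation that $D$ must be enlarged to $D\cup\set*{z}$ so that the matching functional $g$ agrees with $f$ at the test point $z$ and not merely on $D$ (here the countability of $D$ is exactly what keeps $D\cup\set*{z}$ admissible for the hypothesis). Both steps are routine, so I expect no difficulty beyond this bookkeeping.
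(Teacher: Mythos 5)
Your proof is correct, and it is essentially the paper's argument: the key step in both is to match the finitely many relevant functionals from $T$ with functionals from $S$ on the enlarged countable set $D\cup\set*{z}$. The only difference is one of packaging—you verify membership in $\cco{D}{\sigma_T}$ via the identification $(X,\sigma_T)^{\prime}=\lin T$ and the Hahn--Banach description of closed convex hulls, whereas the paper argues directly that every basic $\sigma_T$-neighbourhood of $z$ (determined by finitely many elements of $T$ and an $\varepsilon>0$) meets $\co D$, which avoids invoking the separation theorem.
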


\begin{proof}
Let $D\subseteq X$ be countable and take any $x\in \cco{D}{\sigma_S}$. Further, fix $x_1^{*},\dots ,x_{n}^{*}\in T$ and $\varepsilon> 0$. By assumption we can find $y_{1}^{*},\dots ,y_{n}^{*}\in S$ such that
\begin{equation*}
x_{i}^{*}(y)=y_{i}^{*}(y) \ \  \forall y\in D\cup \set*{x} \ \  \forall i=1,\dots ,n.
\end{equation*}
But then the same equality holds for every $y\in \co{\paren*{D\cup \set*{x}}}$ and since $x\in \cco{D}{\sigma_S}$ we may select some $y\in \co{D}$ with $\abs*{y_{i}^{*}(x)-y_{i}^{*}(y)} \leq \varepsilon$ for all $i=1,\dots ,n$. It follows that $\abs*{x_{i}^{*}(x)-x_{i}^{*}(y)} \leq \varepsilon$ for $i=1,\dots ,n$ and the proof is finished.
\end{proof}

According to the results of Cascales et al. the condition of Lemma \ref{lemma:unifying lemma} is fulfilled if $X=C(K)$ or $X=\ell^1(I)$, $T=\ex{B_{X^{*}}}$ and $S$ is any boundary for $X$ (see \cite[Lemma 1]{cascales1} for $X=C(K)$ and the proof of \cite[Theorem 4.9]{cascales2} for $X=\ell^1(I)$), thus we immediately get the following lemma.

\begin{lemma} \label{lemma:CK-L1}
If $X=C(K)$ for some compact Hausdorff space $K$ or $X=\ell^1(I)$ for some index set $I$ and $B$ is any boundary for $X$, then for every countable set $D\subseteq X$ we have $\cco{D}{\sigma_B}\subseteq \cco{D}{\sigma_E}$, where $E=\ex{B_{X^{*}}}$.
\end{lemma}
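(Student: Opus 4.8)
The plan is to deduce this directly from Lemma \ref{lemma:unifying lemma}. Its conclusion, $\cco{D}{\sigma_S}\subseteq\cco{D}{\sigma_T}$, becomes exactly the asserted inclusion $\cco{D}{\sigma_B}\subseteq\cco{D}{\sigma_E}$ as soon as we choose $S=B$ and $T=E=\ex B_{X^*}$. So the whole proof reduces to checking the hypothesis of that lemma for this particular pair $(S,T)$, and the combinatorial content is then entirely imported from the work of Cascales et al.

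The condition to verify reads: for every countable $D\subseteq X$ and every extreme functional $x^*\in E$ there is a boundary element $y^*\in B$ with $x^*(x)=y^*(x)$ for all $x\in D$. First I would recall the shape of $E$ in the two cases — the signed evaluations $\pm\delta_t$, $t\in K$, when $X=C(K)$, and the sign functions in $\{\pm1\}^I\subseteq\ell^\infty(I)=X^*$ when $X=\ell^1(I)$ — so as to see concretely which functionals must be matched on $D$. Then I would invoke \cite[Lemma 1]{cascales1} for the $C(K)$ case and the argument inside the proof of \cite[Theorem 4.9]{cascales2} for the $\ell^1(I)$ case: in each case these results produce, for any countable family of elements of $X$ and any such extreme functional, a member of the boundary $B$ that agrees with it on the whole family. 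This is precisely the hypothesis of Lemma \ref{lemma:unifying lemma}.

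With that in place the argument closes at once: Lemma \ref{lemma:unifying lemma} applies with $S=B$, $T=E$ and yields $\cco{D}{\sigma_B}\subseteq\cco{D}{\sigma_E}$ for every countable $D\subseteq X$, as claimed.

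I expect the only real difficulty to sit inside the cited results rather than in this assembly: constructing, for a prescribed countable set, a single boundary functional that coincides with a given extreme point of $B_{X^*}$ there. Since the excerpt presents these facts of Cascales et al. as established, my plan is to quote them rather than reprove them, leaving the lemma itself a short formal consequence of Lemma \ref{lemma:unifying lemma}.
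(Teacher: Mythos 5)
Your proposal matches the paper's argument exactly: the paper likewise obtains Lemma \ref{lemma:CK-L1} as an immediate consequence of Lemma \ref{lemma:unifying lemma} with $S=B$ and $T=\ex B_{X^{*}}$, citing \cite[Lemma 1]{cascales1} for $C(K)$ and the proof of \cite[Theorem 4.9]{cascales2} for $\ell^1(I)$ to verify the agreement-on-countable-sets hypothesis. You have also correctly identified which of $S,T$ plays which role, which is the only place one could slip.
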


From Lemma \ref{lemma:CK-L1} and Corollary \ref{cor:quan version moors char wc} we now get the desired result.

\begin{corollary} \label{cor:CK-L1}
If $X=C(K)$ for some compact Hausdorff space $K$ or $X=\ell^1(I)$ for some set $I$ and $B$ is any boundary for $X$ as well as $A\subseteq X$ a bounded set and $\varepsilon \geq 0$ such that for every sequence $(x_n)_{n\in \mathbb{N}}$ in $A$ we have 
\begin{equation*}
\bigcap_{k=1}^{\infty}\paren*{\cco{\set*{x_n:n\geq k}}{\sigma_B}+\varepsilon B_{X}} \neq \emptyset,
\end{equation*}
then $A$ is $2\varepsilon$-WRC.
\end{corollary}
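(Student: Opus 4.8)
The plan is to reduce this statement directly to Corollary~\ref{cor:quan version moors char wc} by using Lemma~\ref{lemma:CK-L1} to pass from the topology $\sigma_B$ to the topology $\sigma_E$, where $E=\ex B_{X^{*}}$. The key observation is that the hypothesis is phrased in terms of $\sigma_B$-closed convex hulls of \emph{countable} tail sets, and Lemma~\ref{lemma:CK-L1} compares precisely such $\sigma_B$-closed convex hulls with their $\sigma_E$-counterparts in $C(K)$- and $\ell^1(I)$-spaces. So the entire task is to rewrite the given assumption in the form demanded by Corollary~\ref{cor:quan version moors char wc}.

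First I would fix an arbitrary sequence $(x_n)_{n\in \mathbb{N}}$ in $A$. For each $k\in \mathbb{N}$ the tail $\set*{x_n:n\geq k}$ is a countable subset of $X$, so Lemma~\ref{lemma:CK-L1} applies and yields
\begin{equation*}
\cco{\set*{x_n:n\geq k}}{\sigma_B}\subseteq \cco{\set*{x_n:n\geq k}}{\sigma_E}.
\end{equation*}
Adding $\varepsilon B_X$ to both sides preserves this inclusion, and since a termwise inclusion of a family of sets passes to their intersections, the hypothesis
\begin{equation*}
\bigcap_{k=1}^{\infty}\paren*{\cco{\set*{x_n:n\geq k}}{\sigma_B}+\varepsilon B_X}\neq \emptyset
\end{equation*}
forces
\begin{equation*}
\bigcap_{k=1}^{\infty}\paren*{\cco{\set*{x_n:n\geq k}}{\sigma_E}+\varepsilon B_X}\neq \emptyset.
\end{equation*}
As this holds for every sequence in $A$, the assumption of Corollary~\ref{cor:quan version moors char wc} is satisfied, and I would invoke that corollary to conclude that $A$ is $2\varepsilon$-WRC.

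The proof is essentially immediate once Lemma~\ref{lemma:CK-L1} and Corollary~\ref{cor:quan version moors char wc} are in hand, so there is no genuine obstacle at this level; the substance has already been absorbed into those two results (the former resting on the work of Cascales et al.\ and the latter on Moors' Theorem~\ref{th:moors}). The only points requiring a moment's care are the verification that each $\set*{x_n:n\geq k}$ is countable so that Lemma~\ref{lemma:CK-L1} is genuinely applicable---which is automatic---and the elementary set-theoretic fact that $U_k\subseteq V_k$ for all $k$ implies $\bigcap_k U_k\subseteq \bigcap_k V_k$, so that non-emptiness of the smaller intersection transfers to the larger one.
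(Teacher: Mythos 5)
Your proof is correct and is exactly the paper's argument: the paper derives this corollary by combining Lemma~\ref{lemma:CK-L1} (applied to the countable tail sets) with Corollary~\ref{cor:quan version moors char wc}, which is precisely what you do. You have simply written out the routine set-theoretic details that the paper leaves implicit.
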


Next we turn to spaces not containing isomorphic copies of $\ell^1$. It is known that for such spaces one has $\cco{B}{\gamma}=B_{X^{*}}$ for every boundary $B$ of $X$, where we denote by $\gamma$ the topology on $X^{*}$ of uniform convergence on bounded countable subsets of $X$ (cf. \cite[Theorem 5.4]{cascales4}). \par 

We will also need two easy lemmas.

\begin{lemma} \label{lemma:gamma1}
Let $A\subseteq X$ and $S\subseteq X^{*}$ be bounded as well as $\varepsilon \geq 0$ such that $A\S \varepsilon \S S$. Then we also have $A\S \varepsilon \S \cl{S}{\gamma}$.
\end{lemma}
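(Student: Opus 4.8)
The statement asserts that the $\varepsilon$-interchangeable double limit property is stable under passing from a bounded set $S \subseteq X^*$ to its $\gamma$-closure $\cl{S}{\gamma}$, where $\gamma$ is the topology of uniform convergence on bounded countable subsets of $X$. The plan is to take two sequences $(x_n)_{n\in\mathbb{N}}$ in $A$ and $(x_m^{*})_{m\in\mathbb{N}}$ in $\cl{S}{\gamma}$ for which all four iterated limits exist, and to reduce the estimate to the assumption $A \S \varepsilon \S S$ by approximating each $x_m^{*}$ by a genuine element of $S$. The crucial observation is that the double limit computation only ever involves the \emph{countable} set $C := \set*{x_n : n\in\mathbb{N}}$, which is bounded; hence $\gamma$-convergence, which is precisely uniform convergence on such sets, gives us uniform control of $x_m^{*}$ on exactly the points that matter.

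\medskip
\noindent\textbf{The main steps.}
First I would fix the countable bounded set $C = \set*{x_n : n\in\mathbb{N}}$. For each $m$, since $x_m^{*}\in\cl{S}{\gamma}$, I can choose $y_m^{*}\in S$ with $\sup_{x\in C}\abs*{x_m^{*}(x)-y_m^{*}(x)}$ as small as I like; the clean way to organize this is to pick $y_m^{*}$ so that this supremum is at most $1/m$ (or at most any fixed $\delta_m \to 0$). This single approximation, taken uniformly over all of $C$, is the whole point of working with $\gamma$ rather than the weak*-topology. Next I would argue that replacing $(x_m^{*})$ by $(y_m^{*})$ does not disturb the iterated limits: because $\abs*{x_m^{*}(x_n)-y_m^{*}(x_n)}\leq 1/m$ for \emph{every} $n$, the inner limit $\lim_{n}y_m^{*}(x_n)$ exists and equals $\lim_{n}x_m^{*}(x_n)$ for each fixed $m$, and letting $m\to\infty$ the error $1/m$ vanishes, so $\lim_{m}\lim_{n}y_m^{*}(x_n)=\lim_{m}\lim_{n}x_m^{*}(x_n)$. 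For the other order, fixing $n$ and letting $m\to\infty$ the error $1/m\to 0$ again, so $\lim_{m}y_m^{*}(x_n)=\lim_{m}x_m^{*}(x_n)$ for each $n$, and then $\lim_{n}\lim_{m}y_m^{*}(x_n)=\lim_{n}\lim_{m}x_m^{*}(x_n)$.

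\medskip
\noindent\textbf{Conclusion and the delicate point.}
Once both iterated limits for $(y_m^{*})$ agree with those for $(x_m^{*})$, the hypothesis $A \S \varepsilon \S S$ applies directly to the sequences $(x_n)$ in $A$ and $(y_m^{*})$ in $S$, yielding that the two iterated limits for $(y_m^{*})$ differ by at most $\varepsilon$; transporting this back gives the desired bound for $(x_m^{*})$. The step requiring the most care is the bookkeeping of limit existence: the definition of $\S\varepsilon\S$ only demands the inequality \emph{provided all limits exist}, so I must verify that the approximating limits for $(y_m^{*})$ genuinely exist before invoking the hypothesis. This is where the uniform ($\gamma$) approximation earns its keep --- a merely pointwise or weak*-approximation would let the inner limits $\lim_n y_m^{*}(x_n)$ fail to track $\lim_n x_m^{*}(x_n)$, and the argument would collapse. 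I expect no genuine obstacle beyond this routine verification, since the uniformity over the countable bounded set $C$ makes every interchange of limit and error estimate immediate.
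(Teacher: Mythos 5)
Your overall strategy coincides with the paper's opening move: approximate each $x_m^{*}\in\cl{S}{\gamma}$ by some $y_m^{*}\in S$ uniformly on the countable bounded set $\set*{x_n:n\in\mathbb{N}}$ to within $1/m$, and transfer the double-limit estimate back to $S$. But there is a genuine gap at precisely the point you flag as delicate and then wave off as routine. For \emph{fixed} $m$, knowing that $\abs*{x_m^{*}(x_n)-y_m^{*}(x_n)}\leq 1/m$ for all $n$ and that $\lim_n x_m^{*}(x_n)$ exists does \emph{not} imply that $\lim_n y_m^{*}(x_n)$ exists: the error term is bounded by the fixed constant $1/m$ but may oscillate in $n$, so $y_m^{*}(x_n)$ need not converge (one only gets $\limsup_n y_m^{*}(x_n)-\liminf_n y_m^{*}(x_n)\leq 2/m$). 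Since the relation $A\S\varepsilon\S S$ can only be invoked when all the relevant limits exist, your argument cannot be completed as written. (Your treatment of the other iterated limit is fine, because there the error $1/m$ tends to $0$ as $m\to\infty$.)

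The paper closes exactly this gap with a diagonal argument: after choosing the approximants $\tilde{x}_m^{*}\in S$, it extracts a subsequence $(x_{n_k})_{k\in\mathbb{N}}$ of $(x_n)_{n\in\mathbb{N}}$ --- still a sequence in $A$ --- along which $\lim_{k}\tilde{x}_m^{*}(x_{n_k})$ exists for every $m$ (possible by boundedness). Passing to this subsequence does not alter the two iterated limits of the original data, and the uniform $1/m$-comparison then yields $\lim_m\lim_k\tilde{x}_m^{*}(x_{n_k})=\lim_m\lim_n x_m^{*}(x_n)$ and $\lim_k\lim_m\tilde{x}_m^{*}(x_{n_k})=\lim_n\lim_m x_m^{*}(x_n)$, after which $A\S\varepsilon\S S$ applied to $(x_{n_k})$ and $(\tilde{x}_m^{*})$ finishes the proof. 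Adding this subsequence extraction (and rerunning your estimates along it) turns your sketch into a correct proof; without it, the key existence claim is false in general.
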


\begin{proof}
Let $(x_n)_{n\in \mathbb{N}}$ and $(x_m^{*})_{m\in \mathbb{N}}$ be sequences in $A$ and $\cl{S}{\gamma}$, respectively, such that the limits 
\begin{equation*}
\lim_{n\to \infty}\lim_{m\to \infty} x_m^{*}(x_n) \ \text{and} \ \lim_{m\to \infty}\lim_{n\to \infty} x_m^{*}(x_n)
\end{equation*}
exist. For each $m\in \mathbb{N}$ we can pick a functional $\tilde{x}_m^{*}\in S$ with
\begin{equation*}
\abs*{x_m^{*}(x_n)-\tilde{x}_m^{*}(x_n)}\leq \frac{1}{m} \ \ \forall n\in \mathbb{N}.
\end{equation*}
By the usual diagonal argument, choose a subsequence $(x_{n_k})_{k\in \mathbb{N}}$ such that $\lim_{k\to \infty}\tilde{x}_m^{*}(x_{n_k})$ exists for all $m$. It then easily follows that 
\begin{align*}
&\lim_{m\to \infty}\lim_{n\to \infty} x_m^{*}(x_n)=\lim_{m\to \infty}\lim_{k\to \infty}\tilde{x}_m^{*}(x_{n_k}) \ \text{and} \ \\
&\lim_{n\to \infty}\lim_{m\to \infty}x_m^{*}(x_n)=\lim_{k\to \infty}\lim_{m\to \infty}\tilde{x}_m^{*}(x_{n_k}).
\end{align*}
Since $A\S \varepsilon \S S$, we conclude that
\begin{equation*}
\abs*{\lim_{n\to \infty}\lim_{m\to \infty}x_m^{*}(x_n)-\lim_{m\to \infty}\lim_{n\to \infty}x_m^{*}(x_n)}\leq \varepsilon,
\end{equation*}
finishing the proof.
\end{proof}

\begin{lemma} \label{lemma:gamma2}
Let $(x_n)_{n\in \mathbb{N}}$ be a bounded sequence in $X$ and $B\subseteq B_{X^{*}}$ such $\cco{B}{\gamma}=B_{X^{*}}$. Then 
\begin{equation*}
\cco{\set*{x_n:n\in \mathbb{N}}}{\sigma_B}=\ncco{\set*{x_n:n\in \mathbb{N}}}.
\end{equation*}
\end{lemma}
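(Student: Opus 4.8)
The plan is to prove the two inclusions separately. Since every $b\in B$ lies in $B_{X^{*}}$, the topology $\sigma_B$ is coarser than the norm topology, so the $\sigma_B$-closed convex hull of any set contains its norm-closed convex hull; this immediately yields $\ncco{\set*{x_n:n\in\mathbb{N}}}\subseteq\cco{\set*{x_n:n\in\mathbb{N}}}{\sigma_B}$. The real work lies in the reverse inclusion.

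For that I would write $C=\set*{x_n:n\in\mathbb{N}}$ and show that every $x\in\cco{C}{\sigma_B}$ satisfies $x^{*}(x)\leq\sup_{n}x^{*}(x_n)$ for all $x^{*}\in X^{*}$. Since $\ncco{C}$ is a norm-closed convex set, it equals the intersection of all the half-spaces $\set*{z:x^{*}(z)\leq\sup_{y\in\ncco{C}}x^{*}(y)}$ by the Hahn-Banach separation theorem; because $\sup_{y\in\ncco{C}}x^{*}(y)=\sup_n x^{*}(x_n)$ for norm-continuous $x^{*}$, these inequalities will place $x$ inside $\ncco{C}$, completing the argument.

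The core of the proof is the following two-part estimate. First, for any convex combination $b^{*}=\sum_{i}\lambda_i b_i$ of elements of $B$, the functional $b^{*}$ is $\sigma_B$-continuous, so the convex set $\set*{z:b^{*}(z)\leq\sup_n b^{*}(x_n)}$ is $\sigma_B$-closed and contains $C$, hence contains $\cco{C}{\sigma_B}$; this gives $b^{*}(x)\leq\sup_n b^{*}(x_n)$ for every such $b^{*}$. Second, fix $x^{*}\in X^{*}$, which after rescaling we may assume lies in $B_{X^{*}}$, and fix $\varepsilon>0$. Since $\cco{B}{\gamma}=B_{X^{*}}$ and the set $D=\set*{x}\cup C$ is bounded and countable, the very definition of $\gamma$ provides a convex combination $b^{*}$ of elements of $B$ with $\sup_{z\in D}\abs*{b^{*}(z)-x^{*}(z)}\leq\varepsilon$. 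Combining the two facts gives $x^{*}(x)\leq b^{*}(x)+\varepsilon\leq\sup_n b^{*}(x_n)+\varepsilon\leq\sup_n x^{*}(x_n)+2\varepsilon$, and letting $\varepsilon\to0$ yields the required inequality for $x^{*}$.

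I expect the heart of the matter, and the only place where the hypothesis genuinely enters, to be this passage to the limit: it is precisely because $\gamma$ is the topology of uniform convergence on bounded countable subsets that a single $b^{*}$ can approximate $x^{*}$ simultaneously at $x$ and at all the points $x_n$, so that both $b^{*}(x)\to x^{*}(x)$ and $\sup_n b^{*}(x_n)\to\sup_n x^{*}(x_n)$ are controlled at once. A weaker topology on $X^{*}$, such as the weak* topology, would not suffice, as it would fail to give uniform control over the infinitely many $x_n$.
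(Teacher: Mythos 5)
Your proof is correct and uses essentially the same approach as the paper's: the key step in both is to approximate a given functional in $B_{X^{*}}$ by a convex combination of elements of $B$ uniformly on the bounded countable set $\set*{x}\cup\set*{x_n:n\in\mathbb{N}}$, which is exactly what the hypothesis $\cco{B}{\gamma}=B_{X^{*}}$ supplies. The only (immaterial) difference is the finishing move: the paper approximates finitely many functionals at once and shows directly via a $3\varepsilon$-estimate that $x$ lies in the weak closure of $\co\set*{x_n:n\in\mathbb{N}}$, whereas you derive the one-sided inequality $x^{*}(x)\leq\sup_{n}x^{*}(x_n)$ for all $x^{*}$ and conclude by the half-space characterization of norm-closed convex sets.
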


\begin{proof}
Take $x\in \cco{\set*{x_n:n\in \mathbb{N}}}{\sigma_B}$ and let $\varepsilon > 0$ and $x_1^{*},\dots , x_k^{*}\in B_{X^{*}}$ be arbitrary. By assumption, we can find $\tilde{x}_1^{*},\dots ,\tilde{x}_k^{*}\in \co{B}$ such that for $i=1,\dots ,k$ we have
\begin{equation*}
\abs*{\tilde{x}_i^{*}(x_n)-x_i^{*}(x_n)}\leq \varepsilon \ \forall n\in \mathbb{N} \  \text{and} \ \abs*{\tilde{x}_i^{*}(x)-x_i^{*}(x)}\leq \varepsilon.
\end{equation*}
It follows that 
\begin{equation*}
\abs*{\tilde{x}_i^{*}(y)-x_i^{*}(y)}\leq \varepsilon \ \ \forall y\in \co{\paren*{\set*{x_n:n\in \mathbb{N}}\cup \set*{x}}} \ \forall i=1,\dots ,k.
\end{equation*}
Now take some element $y\in \co{\set*{x_n:n\in \mathbb{N}}}$ with $\abs*{\tilde{x}_i^{*}(y)-\tilde{x}_i^{*}(x)}\leq \varepsilon$ for all $i=1,\dots ,k$. \par 
\noindent Employing the triangle inequality we can deduce $\abs*{x_i^{*}(x)-x_i^{*}(y)}\leq 3\varepsilon$, which ends the proof.
\end{proof}

As an immediate consequence of Lemma \ref{lemma:gamma2}, Corollary \ref{cor:quan convex hull char wcomp} and the aforementioned result \cite[Theorem 5.4]{cascales4} we get the following corollary.

\begin{corollary} \label{cor:l1 not in X}
Suppose $\ell^1\not\subseteq X$ and let $B$ be a boundary for $X$. If $A\subseteq X$ is bounded and $\varepsilon \geq 0$ such that for each sequence $(x_n)_{n\in \mathbb{N}}$ in $A$ we have 
\begin{equation*}
\bigcap_{k=1}^{\infty} \paren*{\cco{\set*{x_n:n\geq k}}{\sigma_B}+\varepsilon B_X} \neq \emptyset,
\end{equation*}
then $A$ is $2\varepsilon$-WRC.
\end{corollary}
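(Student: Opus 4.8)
The plan is to reduce the hypothesis — which involves $\sigma_B$-closed convex hulls — to the hypothesis of Corollary \ref{cor:quan convex hull char wcomp}, which is phrased in terms of norm closed convex hulls. The only genuinely new input required is the structural theorem for spaces not containing $\ell^1$: by \cite[Theorem 5.4]{cascales4} one has $\cco{B}{\gamma}=B_{X^{*}}$ for every boundary $B$, where $\gamma$ denotes the topology on $X^{*}$ of uniform convergence on bounded countable subsets of $X$. This is exactly the condition under which Lemma \ref{lemma:gamma2} can be brought to bear.

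First I would fix an arbitrary sequence $(x_n)_{n\in \mathbb{N}}$ in $A$. For each $k\in \mathbb{N}$ the tail $\set*{x_n:n\geq k}$ is itself a bounded sequence, so applying Lemma \ref{lemma:gamma2} (with the identity $\cco{B}{\gamma}=B_{X^{*}}$ supplied above) to each such tail gives $\cco{\set*{x_n:n\geq k}}{\sigma_B}=\ncco{\set*{x_n:n\geq k}}$ for every $k$. Inserting these equalities into the assumed relation transforms the hypothesis into $\bigcap_{k=1}^{\infty}\paren*{\ncco{\set*{x_n:n\geq k}}+\varepsilon B_X}\neq \emptyset$.

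Since the sequence $(x_n)_{n\in \mathbb{N}}$ was arbitrary, this nonemptiness holds for every sequence in $A$, and Corollary \ref{cor:quan convex hull char wcomp} then immediately yields that $A$ is $2\varepsilon$-WRC. I expect no real obstacle here, as all the analytic content has been off-loaded onto Lemma \ref{lemma:gamma2} (which converts the $\sigma_B$-closures into norm closures) and onto \cite[Theorem 5.4]{cascales4}; the single point deserving a moment's attention is that Lemma \ref{lemma:gamma2} must be applied separately to each tail $\set*{x_n:n\geq k}$ rather than to the full sequence, which is permissible precisely because every tail is again a bounded sequence.
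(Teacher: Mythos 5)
Your proof is correct and follows exactly the route the paper intends: the paper states this corollary as an immediate consequence of Lemma \ref{lemma:gamma2} (applied, as you note, to each bounded tail), the identity $\cco{B}{\gamma}=B_{X^{*}}$ from \cite[Theorem 5.4]{cascales4}, and Corollary \ref{cor:quan convex hull char wcomp}. No gaps.
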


We can further get a kind of `boundary double limit criterion'.

\begin{proposition} \label{prop:boundary-double-limit-crit}
 Let $B$ be a boundary for $X$ as well as $\varepsilon \geq 0$ and $A\subseteq X$ be bounded such that $A\S \varepsilon \S B$. Then $A$ is $2\varepsilon$-WRC. If $\ell^1\not\subseteq X$, then $A$ is even $\varepsilon$-WRC.
\end{proposition}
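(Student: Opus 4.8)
The plan is to reduce both claims to a double limit statement against the whole dual ball and then to apply Proposition~\ref{prop:eps-WRC}(ii): once I know $A\S \varepsilon \S B_{X^{*}}$ I get that $A$ is $\varepsilon$-WRC, while $A\S 2\varepsilon \S B_{X^{*}}$ only yields $2\varepsilon$-WRC. Hence the entire task is to upgrade the hypothesis $A\S \varepsilon \S B$ from the boundary $B$ to the far larger set $B_{X^{*}}$, and the two conclusions will come from two genuinely different ways of performing this upgrade.

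In the case $\ell^{1}\not\subseteq X$ I would run the argument through the topology $\gamma$ of uniform convergence on bounded countable sets, for which Lemma~\ref{lemma:gamma1} has been tailored. First I would verify that $\varepsilon$-interchangeable double limits survive the passage to the convex hull, that is, $A\S \varepsilon \S B$ implies $A\S \varepsilon \S \co B$; this is a direct computation, using that each member of $\co B$ is an affine average of points of $B$ and that a weak$^{*}$-cluster point of the testing sequence still lies in $\cl{\co B}{w^{*}}=B_{X^{*}}$. Applying Lemma~\ref{lemma:gamma1} with $S=\co B$ then gives $A\S \varepsilon \S \cco{B}{\gamma}$, and since $\ell^{1}\not\subseteq X$ the cited result \cite[Theorem~5.4]{cascales4} identifies $\cco{B}{\gamma}=B_{X^{*}}$. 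Thus $A\S \varepsilon \S B_{X^{*}}$, and Proposition~\ref{prop:eps-WRC}(ii) closes this case with the sharp constant~$\varepsilon$.

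For an arbitrary $X$ the set $\cco{B}{\gamma}$ is typically a proper, though weak$^{*}$-dense, subset of $B_{X^{*}}$, so the $\gamma$-approximation is unavailable and I would call on Simons' equality (Theorem~\ref{th:SLS}) instead. Given testing sequences $(x_{n})$ in $A$ and $(x_{m}^{*})$ in $B_{X^{*}}$ for which both iterated limits exist, I would pass to a weak$^{*}$-cluster point $x^{*}\in B_{X^{*}}$ of $(x_{m}^{*})$ and a weak$^{*}$-cluster point $x^{**}$ of $(x_{n})$, so that the two iterated limits read $\lim_{n}x^{*}(x_{n})$ and $\lim_{m}x^{**}(x_{m}^{*})$. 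The aim is to estimate their difference by feeding the bounded sequence $(x_{n})$ (and its negative) into Simons' equality, which replaces suprema over $B_{X^{*}}$ by suprema over $B$ and so lets the hypothesis $A\S \varepsilon \S B$ take effect; the inevitable passage between $\limsup$ and $\liminf$ in such Simons-type estimates costs a factor $2$, producing $A\S 2\varepsilon \S B_{X^{*}}$, whence $A$ is $2\varepsilon$-WRC by Proposition~\ref{prop:eps-WRC}(ii).

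The decisive obstacle is exactly this last transfer from $B$ to $B_{X^{*}}$ for a general space: it is a quantitative incarnation of the Rainwater--Simons theorem behind Corollary~\ref{cor:RS conv thm}, it is where Simons' equality is indispensable, and it is where the factor $2$ is forced rather than cosmetic. By comparison the convexification step in the $\ell^{1}$-free case is the only other point requiring attention, but it is elementary, since only affine averages of functionals are involved.
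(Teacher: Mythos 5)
Your overall architecture --- upgrade the hypothesis from $B$ to $B_{X^{*}}$ and then invoke Proposition~\ref{prop:eps-WRC}(ii) --- is exactly the paper's, and your treatment of the case $\ell^{1}\not\subseteq X$ (convexify, pass to the $\gamma$-closure via Lemma~\ref{lemma:gamma1}, identify $\cco{B}{\gamma}=B_{X^{*}}$ by \cite[Theorem 5.4]{cascales4}) coincides with the paper's. However, both of the steps you actually need to supply are asserted rather than proved, and the justifications you offer do not hold up. First, the convexification step $A\S\varepsilon\S B\Rightarrow A\S\varepsilon\S\co B$ is not ``a direct computation'': a testing sequence $(y_{m}^{*})$ in $\co B$ consists of convex combinations whose number of terms and whose constituent functionals from $B$ both vary with $m$, so no averaging or diagonal argument reduces it to sequences in $B$. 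By the symmetry of the double-limit condition this implication is precisely the combinatorial core of the (quantitative) Krein theorem; the paper does not prove it but cites \cite[Theorem 3.3]{cascales3} for it. Your parenthetical remark that a weak$^{*}$-cluster point of the testing sequence lies in $\cl{\co B}{w^{*}}=B_{X^{*}}$ is true but has no bearing on this step.

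Second, for general $X$ your plan to pass from $B$ to $B_{X^{*}}$ by ``feeding $(x_{n})$ and its negative into Simons' equality'' is not a proof, and I do not see how to make it one. Theorem~\ref{th:SLS} compares the single quantity $\sup\limsup_{n}$ taken over $B$ with the same quantity taken over $B_{X^{*}}$; it gives no control over the iterated limits of $x_{m}^{*}(x_{n})$ against an \emph{arbitrary} sequence $(x_{m}^{*})$ in $B_{X^{*}}$, which is what $A\S 2\varepsilon\S B_{X^{*}}$ requires. The paper's route is different and is where the factor $2$ genuinely arises: since $B$ is a boundary (indeed, norming suffices), the Hahn--Banach separation theorem gives $B_{X^{*}}=\cco{B}{w^{*}}$, and \cite[Lemma 3]{angosto} shows that passing from the convex set $\co B$ to its weak$^{*}$ closure turns $A\S\varepsilon\S\co B$ into $A\S 2\varepsilon\S B_{X^{*}}$. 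This also explains why nothing is lost in the $\ell^{1}$-free case: there one closes in the much finer topology $\gamma$, for which Lemma~\ref{lemma:gamma1} is lossless. As written, the decisive inequality in your general case is missing, and a Simons-based substitute would itself require a quantitative Rainwater--Simons theorem that you have not supplied.
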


\begin{proof}
From \cite[Theorem 3.3]{cascales3} it follows that we also have $A\S \varepsilon \S \co{B}$. Since $B$ is a boundary for $X$ the Hahn-Banach separation theorem implies $B_{X^{*}}=\cco{B}{w^{*}}$. Therefore it follows from \cite[Lemma 3]{angosto} that $A\S 2\varepsilon \S B_{X^{*}}$. Thus by (ii) of Proposition \ref{prop:eps-WRC} $A$ is $2\varepsilon$-WRC.\footnote{This proof also works under the weaker assumption that $B$ is only norming for $X$, i.e. $\norm*{x}=\sup_{b\in B}b(x)$ for all $x\in X$, because in this case we also have $B_{X^{*}}=\cco{B}{w^{*}}$ by the separation theorem.} \par 
 Moreover, if $\ell^1\not\subseteq X$ then we even have $B_{X^{*}}=\cco{B}{\gamma}$ by the already cited \cite[Theorem 5.4]{cascales4}. Hence $A\S \varepsilon \S B_{X^{*}}$ by Lemma \ref{lemma:gamma1}, thus $A$ is $\varepsilon$-WRC.
\end{proof}

Our final aim in this note is to prove a `non-relative' version of Theorem \ref{th:moors char wc} for arbitrary boundaries. To do so, we will use the techniques of Pfitzner from \cite{pfitzner}. More precisely, we can get the following slight generalization of the ``in particular case'' of \cite[Proposition 8]{pfitzner}. Recall that an $\ell^1$-sequence in $X$ is simply a sequence equivalent to the canonical basis of $\ell^1$.

\begin{proposition} \label{prop:non-rel version l1}
 Let $B$ be a boundary for $X$. If  $A\subseteq X$ is bounded and for every sequence $(x_n)_{n\in \mathbb{N}}$ in $A$ we have 
\begin{equation}
A\cap \bigcap _{k=1}^{\infty} \cco{\set*{x_n:n\geq k}}{\sigma_B} \neq \emptyset,
\end{equation}
then $A$ does not contain an $\ell^1$-sequence.
\end{proposition}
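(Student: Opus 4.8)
The plan is to argue by contradiction and to reduce the statement to Pfitzner's construction, the only change being that the terms of the given sequence are replaced by suitable convex blocks supplied by the hypothesis. So suppose that $A$ contains an $\ell^1$-sequence $(x_n)_{n\in\mathbb{N}}$, and fix a lower $\ell^1$-constant $a>0$, so that $\norm*{\sum_i \alpha_i x_i}\geq a\sum_i\abs*{\alpha_i}$ for all finitely supported scalar families. Applying the assumption of Proposition~\ref{prop:non-rel version l1} to this sequence produces a point $x\in A$ with $x\in\cco{\set*{x_n:n\geq k}}{\sigma_B}$ for every $k$. Unwinding the definition of the $\sigma_B$-closed convex hull, this membership says precisely: for every finite $F\subseteq B$, every $\eta>0$ and every $k$ there is a finite convex combination $u=\sum_{n\geq k}\lambda_n x_n$ with $\abs*{b(u)-b(x)}<\eta$ for all $b\in F$. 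The second structural input is that disjointly supported convex combinations of the $x_n$ are norm-separated by $2a$; in particular, any sequence of such blocks whose supports march off to infinity is itself an $\ell^1$-sequence with the same constant $a$.

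With these two facts I would run Pfitzner's inductive scheme from \cite{pfitzner}. One builds \emph{simultaneously} a sequence of blocks $(u_j)_{j\in\mathbb{N}}$ with pairwise disjoint supports tending to infinity and a nested family of finite sets $F_1\subseteq F_2\subseteq\cdots$ of boundary functionals. At step $j$ one uses the displayed approximation property to choose $u_{j+1}$ (supported far enough out to keep the supports disjoint) so that $\abs*{b(u_{j+1})-b(x)}<2^{-j}$ for all $b\in F_j$; and one uses the quantitative version of Rosenthal's $\ell^1$-theorem due to Behrends \cite{behrends}, applied to the blocks constructed so far, together with Simons' equality (Theorem~\ref{th:SLS}), to produce a functional in the \emph{boundary} $B$ witnessing an asymptotic oscillation bounded below by a universal multiple of $a$, which is then adjoined to $F_{j+1}$. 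Simons' equality is exactly what lets one transfer the oscillation functional out of $B_{X^*}$, where Behrends' theorem delivers it, into the boundary $B$; this is the only place where the boundary hypothesis is used.

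The resulting data are then incompatible. On one hand the countable set $\bigcup_j F_j$ consists of boundary functionals on which, by construction, $b(u_j)\to b(x)$, so each such $b$ has asymptotic oscillation $0$ along $(u_j)$. On the other hand the construction keeps feeding into $\bigcup_j F_j$ functionals whose asymptotic oscillation along $(u_j)$ stays above a fixed positive constant. The tension between these two facts is the contradiction that excludes the $\ell^1$-sequence. I expect the main obstacle to be precisely the engineering of this tension: the oscillation functional produced by Behrends at stage $j$ must be arranged to control the \emph{tail} of $(u_j)$, which forces one to interleave the two inductions in the tree-like manner of the Hagler--Johnson variant that Pfitzner employs, rather than performing them one after the other. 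Getting the quantitative lower bound to survive each enlargement of $F_j$ is the delicate bookkeeping at the heart of \cite{pfitzner}.

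Finally, I would stress that the modification of Pfitzner's argument is essentially cosmetic: wherever he inserts the terms of a sequence into the construction, we instead insert the convex blocks $u_j$, whose existence is guaranteed by the hull membership $x\in\cco{\set*{x_n:n\geq k}}{\sigma_B}$ and whose $2a$-separation (hence $\ell^1$-character) is inherited from $(x_n)$. Since the remainder of his construction uses only these two features of the objects fed into it, the whole argument transfers verbatim and yields Proposition~\ref{prop:non-rel version l1}.
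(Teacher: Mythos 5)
Your overall strategy (rerun Pfitzner's construction on convex blocks supplied by the hypothesis) points in the right direction, but the scheme you describe has a genuine gap and departs from the actual argument in ways that matter. First, you invoke the hypothesis only once, applied to the original sequence $(x_n)_{n\in\mathbb{N}}$, and you never use the fact that the resulting point $x$ lies in $A$. In the paper's proof (following \cite[Proposition 8]{pfitzner}) the hypothesis is applied infinitely many times: once along each branch of the tree $(\Omega_\sigma)_{\sigma\in S}$ produced by \cite[Lemma 7]{pfitzner} to obtain points $y_m\in A\cap\bigcap_{k}\cco{\set*{x_n:n\geq k}}{\sigma_B}$ with $b_k(y_m-y_{m'})\geq 2\varepsilon(1-\alpha_k)$ for $m\leq k<m'$, and then once more to the sequence $(y_m)_{m\in\mathbb{N}}$ itself to obtain $y$ --- and it is exactly the intersection with $A$ that makes $(y_m)_{m\in\mathbb{N}}$ an admissible sequence for this re-application. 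You give no argument that a single point of $\bigcap_{k}\cco{\set*{x_n:n\geq k}}{\sigma_B}$ already contradicts the $\ell^1$-character of $(x_n)_{n\in\mathbb{N}}$; the easy $2a$-separation argument you cite works for norm-closed convex hulls of disjoint blocks, but the $\sigma_B$-closed hulls are far larger, and controlling them is the whole difficulty.

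Second, the contradiction you propose is circular as stated: the functional adjoined to $F_{j+1}$ at stage $j$ is supposed to have asymptotic oscillation bounded below along the tail $(u_i)_{i>j}$, yet that tail is constructed afterwards and is constructed precisely so that $b(u_i)\to b(x)$ for every $b\in\bigcup_j F_j$. You cannot impose both requirements on the same functional, so the ``tension'' you describe is not a contradiction you derive but an obstruction to even carrying out the construction; resolving it is the entire content of Pfitzner's tree lemma and of the claim in his Proposition 8, which you defer to as ``delicate bookkeeping.'' Note also that the final contradiction in the paper is of a different nature from the one you sketch: one sets $x=\sum_{m}2^{-m}(y_m-y)$, shows $\norm*{y_m-y}\leq 2\varepsilon$ and $\norm*{x}=2\varepsilon$, and picks $b\in B$ attaining its norm on $x$; this forces $b(y_m-y)=2\varepsilon$ for every $m$, whence $b(y)=2\varepsilon+b(y)$ because $y\in\cco{\set*{y_m:m\geq k}}{\sigma_B}$. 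So the boundary hypothesis enters decisively through norm attainment on a specific vector, not only through Simons' equality as you claim.
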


\begin{proof}
The proof is completely analogous to that of \cite[Proposition 8]{pfitzner}, therefore we will only give a very brief sketch. We use the notation and definitions from \cite{pfitzner}. Arguing by contradiction, we assume that there is an $\ell^1$-sequence $(x_n)_{n\in \mathbb{N}}$ in $A$. By \cite[Lemma 2]{pfitzner} we may assume that $(x_n)_{n\in \mathbb{N}}$ is $\delta$-stable. We take a sequence $(\alpha_k)_{k\in \mathbb{N}}$ of positive numbers decreasing to zero. By \cite[Lemma 7]{pfitzner} we can find $\varepsilon\geq 1/2\tilde{\delta}_{B}(x_n)=1/2\tilde{\delta}(x_n)>0$, a sequence $(b_k)_{k\in \mathbb{N}}$ in $B$ and a tree $(\Omega_{\sigma})_{\sigma\in S}$ such that for each $k\in \mathbb{N}$ and every $\sigma,\sigma^{\prime}\in S_k$ with $\sigma_k=0$ and $\sigma_k^{\prime}=1$ we have
\begin{equation*}
b_k(x_n-x_{n^{\prime}})\geq 2\varepsilon (1-\alpha_k) \ \ \forall n\in \Omega_{\sigma},n^{\prime}\in \Omega_{\sigma^{\prime}}.
\end{equation*}
It follows that the same inequality holds for every $x\in \cco{\set*{x_n:n\in \Omega_{\sigma}}}{\sigma_B}$ and $x^{\prime}\in \cco{\set*{x_{n^{\prime}}:n^{\prime}\in \Omega_{\sigma^{\prime}}}}{\sigma_B}$. \par 
\noindent Now using our hypothesis we can proceed completely analogous to the proof of the claim in \cite[Proposition 8]{pfitzner} to find a sequence $(y_m)_{m\in \mathbb{N}}$ in $A\cap \bigcap_{k=1}^{\infty}\cco{\set*{x_n:n\geq k}}{\sigma_B}$ such that
\begin{equation*}
b_k(y_m-y_{m^{\prime}})\geq 2\varepsilon (1-\alpha_k) \ \ \forall m\leq k < m^{\prime}.
\end{equation*}
Next we take an element
\begin{equation*}
y\in A\cap \bigcap_{k=1}^{\infty}\cco{\set*{y_m:m\geq k}}{\sigma_B}.
\end{equation*}
As in the proof of \cite[Proposition 8]{pfitzner} we put
\begin{equation*}
x=\sum_{m=1}^{\infty} 2^{-m}(y_m-y)
\end{equation*}
and proceed again exactly as in the proof of \cite[Proposition 8]{pfitzner} to show that $\norm*{y_m-y}\leq 2\varepsilon$ for all $m$ and $\norm*{x}=2\varepsilon$. Finally, taking a functional $b\in B$ with $b(x)=\norm*{x}$ we obtain $b(y)=2\varepsilon+b(y)$ and with this contradiction the proof is finished.
\end{proof}

Now we can get the final result.

\begin{theorem} \label{th:non-rel version}
 Let $B$ be a boundary for $X$ and $A\subseteq X$ be bounded. Then the following assertions are equivalent:
\begin{enumerate}[\upshape(i)]
\item $A$ is countably compact in the topology $\sigma_B$.
\item For every sequence $(x_n)_{n\in \mathbb{N}}$ in $A$ we have 
\begin{equation*}
A\cap \bigcap_{k=1}^{\infty} \cco{\set*{x_n:n\geq k}}{\sigma_B} \neq \emptyset.
\end{equation*}
\item For every sequence $(x_n)_{n\in \mathbb{N}}$ in $A$ there is some $x\in A$ with
\begin{equation*}
x^{*}(x)\leq \limsup_{n\to \infty}x^{*}(x_n) \ \ \forall x^{*}\in \lin{B}.
\end{equation*}
\item $A$ is weakly compact.
\end{enumerate}
\end{theorem}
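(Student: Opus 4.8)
The plan is to close the cycle of implications (iv)$\Rightarrow$(i)$\Rightarrow$(ii)$\Rightarrow$(iv), establishing the equivalence (ii)$\Leftrightarrow$(iii) separately by soft arguments, so that the whole statement rests on a single substantial step. The implication (iv)$\Rightarrow$(i) is immediate: since $B\subseteq B_{X^{*}}$, every functional in $B$ is weakly continuous, so $\sigma_B$ is coarser than the weak topology; hence a weakly compact $A$ is $\sigma_B$-compact, in particular $\sigma_B$-countably compact. For (i)$\Rightarrow$(ii) I would use that a boundary is norming and therefore separates the points of $X$, so $\sigma_B$ is Hausdorff; given a sequence $(x_n)_{n\in\mathbb{N}}$ in $A$, countable compactness produces a $\sigma_B$-cluster point $x\in A$, and any such cluster point lies in $\cl{\set*{x_n:n\geq k}}{\sigma_B}\subseteq\cco{\set*{x_n:n\geq k}}{\sigma_B}$ for every $k$, which is exactly (ii).

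For the equivalence (ii)$\Leftrightarrow$(iii) I would invoke Lemma \ref{lemma:quan convex hull char} with $\varepsilon=0$ and $B$ in the role of the separating set. From (ii), the element $x\in A\cap\bigcap_{k}\cco{\set*{x_n:n\geq k}}{\sigma_B}$ satisfies, by the lemma, $x^{*}(x)\leq\limsup_{n}x^{*}(x_n)$ for all $x^{*}\in B_{X^{*}}\cap\lin B$; positive homogeneity of both sides then extends the inequality to all of $\lin B$, yielding (iii) (with the same $x\in A$). Conversely, (iii) restricted to $B_{X^{*}}\cap\lin B$ is precisely condition (ii) of that lemma for $\varepsilon=0$, so the lemma returns $x\in\bigcap_{k}\cco{\set*{x_n:n\geq k}}{\sigma_B}$, and since $x\in A$ this is (ii). This block uses only the lemma, so no circularity arises when (iii) is invoked below.

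The substantial step is (ii)$\Rightarrow$(iv), and here I would lean on the deep input already isolated in Proposition \ref{prop:non-rel version l1}, whose hypothesis is exactly (ii): it gives that $A$ contains no $\ell^1$-sequence. By Rosenthal's $\ell^1$-theorem, every sequence $(x_n)_{n\in\mathbb{N}}$ in $A$ then admits a weakly Cauchy subsequence $(x_{n_k})_{k\in\mathbb{N}}$, so that $\lim_{k}x^{*}(x_{n_k})$ exists for all $x^{*}\in X^{*}$. Applying (iii) to this subsequence yields some $x\in A$ with $x^{*}(x)\leq\lim_{k}x^{*}(x_{n_k})$ for every $x^{*}\in\lin B$; replacing $x^{*}$ by $-x^{*}$ upgrades this to equality, so $(x_{n_k})_{k\in\mathbb{N}}$ is $\sigma_B$-convergent to $x$, and by the Rainwater--Simons Corollary \ref{cor:RS conv thm} it is weakly convergent to $x\in A$. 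Thus every sequence in $A$ has a subsequence converging weakly to a point of $A$. I expect the main obstacle to be the passage from this sequential property to genuine weak compactness rather than mere relative weak compactness (the genuinely hard analytic content, involving Pfitzner's tree construction, being already packaged in Proposition \ref{prop:non-rel version l1} and taken as given). I would resolve it via the Eberlein--\v{S}mulian theorem, which renders $A$ relatively weakly compact, together with the angelicity of the weakly compact set $\cl{A}{w}$: each point of $\cl{A}{w}$ is the weak limit of a sequence from $A$, whose subsequential weak limit lies in $A$ and must coincide with it, forcing $\cl{A}{w}=A$ and hence (iv).
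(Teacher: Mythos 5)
Your proposal is correct and follows essentially the same route as the paper: the soft implications and the equivalence (ii)$\Leftrightarrow$(iii) via Lemma \ref{lemma:quan convex hull char} are handled just as in the text, and the main implication (ii)$\Rightarrow$(iv) uses exactly the paper's chain of Proposition \ref{prop:non-rel version l1}, Rosenthal's theorem, the choice of a point in the intersection (equivalently, condition (iii)) to identify the $\sigma_B$-limit, Corollary \ref{cor:RS conv thm}, and Eberlein--\v{S}mulian. Your closing remarks on angelicity merely spell out the sequential-to-topological passage that the paper delegates to Eberlein--\v{S}mulian, so there is no substantive difference.
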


\begin{proof}
The implications (i) $\Rightarrow$ (ii) and (iv) $\Rightarrow$ (i) are clear and the equi\-valence of (ii) and (iii) follows from Lemma \ref{lemma:quan convex hull char}. It only remains to prove (ii) $\Rightarrow$ (iv). \par 
Let us assume that (ii) holds and take an arbitrary sequence $(x_n)_{n\in \mathbb{N}}$ in $A$. By Proposition \ref{prop:non-rel version l1} no subsequence of $(x_n)_{n\in \mathbb{N}}$ is an $\ell^1$-sequence and thus Rosenthal's theorem (cf. \cite{behrends} or \cite[Theorem 10.2.1]{albiac}) applies to yield a subsequence $(x_{n_k})_{k\in \mathbb{N}}$ which is weakly Cauchy. Now choose an element 
\begin{equation*}
x\in A\cap \bigcap_{l=1}^{\infty}\cco{\set*{x_{n_k}:k\geq l}}{\sigma_B}.
\end{equation*}
It easily follows that $\lim_{k\to \infty}b(x_{n_k})=b(x)$ for all $b\in B$. By Corollary \ref{cor:RS conv thm} $(x_{n_k})_{k\in \mathbb{N}}$ is weakly convergent to $x$. \par 
\noindent Thus we have shown that $A$ is weakly sequentially compact. Hence it is also weakly compact, by the Eberlein-\v{S}mulian theorem.
\end{proof}

\noindent {\em Remark 1.} It is proved in \cite[Remark 10]{fabian} that for $X=\ell^1$ the statement $B_X\S \varepsilon \S B_{X^{*}}$ is false for every $0<\varepsilon<2$. An alternative proof of this fact is given \cite[Example 5.2]{cascales5}. It is further proved in \cite[Remark 10]{fabian} that every separable Banach space $X$ which contains an isomorphic copy of $\ell^1$ can be equivalently renormed such that, in this renorming, the statement $B_X\S \varepsilon \S B_{X^{*}}$ is false for every $0<\varepsilon<2$. The proof makes use of the notion of octahedral norms. \par 

We wish to point out here that the argument from \cite[Example 5.2]{cascales5} can be carried over to arbitrary Banach spaces containing a copy of $\ell^1$, precisely we have the following proposition.

\begin{proposition}
 If $X$ is a (not necessarily separable) Banach space which contains $\ell^1$ than the statement $B_X\S \varepsilon \S B_{X^{*}}$ (in the original norm of X) is false for every $0<\varepsilon<2$.
\end{proposition}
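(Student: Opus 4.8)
The plan is to reproduce, inside an almost isometric copy of $\ell^1$ sitting in $B_X$, the classical double-limit counterexample for $\ell^1$ itself, and to push the resulting double-limit gap arbitrarily close to $2$. Recall the prototype in $\ell^1$: taking $x_n=e_n$ (the unit vectors) in $B_{\ell^1}$ and the functionals $\phi_m\in\ell^\infty=(\ell^1)^*$ given by $\phi_m(j)=1$ for $j\le m$ and $\phi_m(j)=-1$ for $j>m$ (so $\norm{\phi_m}_\infty=1$), one has $\phi_m(e_n)=1$ if $n\le m$ and $\phi_m(e_n)=-1$ if $n>m$. Hence $\lim_n\lim_m \phi_m(e_n)=1$ while $\lim_m\lim_n \phi_m(e_n)=-1$, so the two iterated limits differ by $2$. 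The task is to carry this over to an arbitrary $X$ containing $\ell^1$.

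First I would invoke James' non-distortion theorem for $\ell^1$ (see, e.g., \cite{albiac}): since $\ell^1\subseteq X$, for every $\eta\in(0,1)$ there is a normalized sequence $(y_n)_{n\in\mathbb{N}}$ in $X$ that is $(1+\eta)$-equivalent to the unit vector basis of $\ell^1$, i.e.
\begin{equation*}
(1-\eta)\sum_i\abs{a_i}\le \norm*{\sum_i a_i y_i}\le \sum_i\abs{a_i}
\end{equation*}
for all finitely supported scalar families $(a_i)$. In particular $\norm{y_n}\le 1$, so $(y_n)\subseteq B_X$. Next I would transfer the sign functionals: for each $m$ define a linear functional $g_m$ on $\lin\set*{y_n:n\in\mathbb{N}}$ by $g_m(\sum_i a_i y_i)=\sum_i a_i\phi_m(i)$. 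The estimates above give $\abs{g_m(\sum a_i y_i)}\le\sum\abs{a_i}\le (1-\eta)^{-1}\norm{\sum a_i y_i}$, so $\norm{g_m}\le(1-\eta)^{-1}$. By the Hahn-Banach theorem extend $g_m$ to $\tilde g_m\in X^*$ with the same norm and put $x_m^*=(1-\eta)\tilde g_m$, so that $x_m^*\in B_{X^*}$ and $x_m^*(y_n)=(1-\eta)\phi_m(n)$.

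It then remains to evaluate the two iterated limits for the sequences $(y_n)\subseteq B_X$ and $(x_m^*)\subseteq B_{X^*}$. For fixed $n$ we have $x_m^*(y_n)=(1-\eta)$ for all $m\ge n$, hence $\lim_n\lim_m x_m^*(y_n)=1-\eta$; for fixed $m$ we have $x_m^*(y_n)=-(1-\eta)$ for all $n>m$, hence $\lim_m\lim_n x_m^*(y_n)=-(1-\eta)$. All the limits exist and their difference equals $2(1-\eta)$. Consequently, given any $\varepsilon\in(0,2)$, choosing $\eta$ so small that $2(1-\eta)>\varepsilon$ produces sequences witnessing a double-limit gap strictly larger than $\varepsilon$, which shows that $B_X\S\varepsilon\S B_{X^*}$ fails. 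The only genuinely delicate point is the very first step: a naive transfer using an arbitrary isomorphic copy of $\ell^1$ (with isomorphism constant $C$) would only yield a gap of $2/C$, which need not approach $2$; it is James' non-distortion theorem, guaranteeing \emph{almost} isometric copies, that removes this obstruction and lets the gap tend to $2$.
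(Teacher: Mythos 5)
Your proposal is correct and follows essentially the same route as the paper: James' non-distortion ($\ell^1$-distortion) theorem to obtain an almost isometric copy of $\ell^1$ in $B_X$, transfer of the classical sign functionals $\phi_m$ to the span via the (inverse of the) adjoint, Hahn--Banach extension after rescaling into $B_{X^*}$, and the computation of the two iterated limits giving a gap of $2(1-\eta)>\varepsilon$. Your closing remark about why an arbitrary isomorphic copy would not suffice correctly identifies the one point where the argument genuinely needs James' theorem.
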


\begin{proof}
Take $0<\varepsilon<2$ arbitrary and fix $0<\delta <1$ such that $2(1-\delta)>\varepsilon$. Since $X$ contains $\ell^1$ we may find, with the aid of James' $\ell^1$-distortion theorem (cf. \cite[Theorem 10.3.1]{albiac}), a sequence $(x_n)_{n\in \mathbb{N}}$ in the unit sphere of $X$ such that $T: \ell^1 \rightarrow X$ defined by 
\begin{equation*}
Ty=\sum_{k=1}^{\infty}\alpha_k x_k \ \ \forall y=(\alpha_n)_{n\in \mathbb{N}}\in \ell^1
\end{equation*}
is an isomorphism (onto $U=\operatorname{ran}T$) with $\norm*{T^{-1}}\leq (1-\delta)^{-1}$. Consequently, the adjoint $T^{*}:U^{*} \rightarrow \ell^{\infty}$ is as well an isomorphism with $\norm*{(T^{*})^{-1}}\leq (1-\delta)^{-1}$. Now we can define as in \cite[Example 5.2]{cascales5} for each $n\in \mathbb{N}$ a norm one functional $y_n^{*}\in \ell^{\infty}$ by 
\begin{equation*}
y_n^{*}(m)=
\begin{cases}
 1, &  \text{if\ } m\leq n \\
-1, & \text{if\ } m>n.
\end{cases}
\end{equation*}
Put $u_n^{*}=(T^{*})^{-1}y_n^{*}$  for all $n\in \mathbb{N}$. Then $\norm*{u_n^{*}}\leq(1-\delta)^{-1}$ and hence by the Hahn-Banach extension theorem we can find $x_n^{*}\in B_{X^{*}}$ with $x_n^{*}|_U=(1-\delta)u_n^{*}$ for all $n\in \mathbb{N}$. It follows that 
\begin{equation*}
\abs*{\lim_{n\to \infty}\lim_{m\to \infty} x_n^{*}(x_m)-\lim_{m\to \infty}\lim_{n\to \infty}x_n^{*}(x_m)}=2(1-\delta)>\varepsilon
\end{equation*}
and the proof is finished.
\end{proof}

 In the notation of \cite{cascales5} we have proved $\gamma(B_X)=2$ for every Banach space $X$ containing an isomorphic copy of $\ell^1$, which implies that the value of $B_X$ under all other measures of weak non-compactness considered in \cite{cascales5} is equal to one (again compare \cite[Example 5.2]{cascales5}). So in a certain sense a Banach space containing $\ell^1$ is `as non-reflexive as  possible'.
\par \ \\

\noindent {\em Remark 2.} Shortly after the first version of this paper was published on the web, the author received a message from Prof. Warren B. Moors, who kindly pointed out to him that the above Lemma \ref{lemma:CK-L1} probably also holds true if $X$ is an $L_1$-predual\footnote{Recall that a Banach space $X$ is called an $L_1$-predual if $X^{*}$ is isometric to $L_1(\mu)$ for some suitable measure $\mu$.} (which includes all $C(K)$-spaces), refering to the paper \cite{moors2}. Indeed, from \cite[Theorem 3]{moors2} one can easily get the following result: If $B$ is a boundary for the $L_1$-predual $X$ and $E=\ex B_{X^{*}}$, then
\begin{equation*}
\cco{\set*{x_n:n\in \mathbb{N}}}{\sigma_B}\subseteq \cco{\set*{x_n:n\in \mathbb{N}}}{\sigma_E}
\end{equation*}
holds for every sequence $(x_n)_{n\in \mathbb{N}}$ in $X$. For the proof just apply \cite[Theorem 3]{moors2} to the countable set $\operatorname{co}_{\mathbb{Q}}\set*{x_n:n\in \mathbb{N}}$ consisting of all convex combinations of the $x_n$'s with rational coefficients.\par
It follows that Corollary \ref{cor:CK-L1} also carries over to arbitrary boundaries of $L_1$-preduals. \par

\ \\{\sc Acknowledgement.} The author wishes to express his gratitude to Prof. Warren B. Moors for providing him with the important hint already mentioned in the remark above and to the anonymous referee for multiple comments and suggestions (in particular for proposing Lemma \ref{lemma:unifying lemma}) which improved the exposition of the results.

{\sc \noindent Department of Mathematics \\ Freie Universit\"at Berlin \\ Arnimallee 6, 14195 Berlin \\ Germany \\}
{\it E-mail address: \href{mailto:hardtke@math.fu-berlin.de}{\tt hardtke@math.fu-berlin.de}}

\end{document}